\newtheorem{theo}{Theorem}[section]
\newtheorem{prop}[theo]{Proposition}
\newtheorem{lem}[theo]{Lemma}
\newtheorem{rem}[theo]{Remark}
\newtheorem{col}[theo]{Corollary}
\newtheorem{defi}[theo]{Definition}
\newcommand{\bp}{\begin{proof}}
\newcommand{\ep}{\end{proof}}
\begin{document}
 \setlength{\baselineskip}{13pt} \pagestyle{myheadings}

 \title{
 {On the $\mathcal{D}^+_J$ operator on higher-dimensional almost K\"{a}hler manifolds }
 \thanks{ Supported by NSFC (China) Grants 12171417, 1197112.}
 }
 \author{{\large Qiang Tan, Hongyu Wang, Ken Wang\thanks {E-mail:
 kanwang22@m.fudan.edu.cn}, Zuyi Zhang}\\
 }

 \date{}
 \maketitle
 {\centering\it In Memory of Professor Zhengzhong Huang (1916-2012).\par}
~
~

 \noindent {\bf Abstract:}
 In this paper, we introduce $\mathcal{D}^+_J$, a generalization of $\partial\bar{\partial}$ operator on higher dimensional
 almost K\"{a}hler manifolds.
 Using the $\mathcal{D}^+_J$ operator, we investigate the $\bar{\partial}$-problem in almost K\"{a}hler geometry
 and explore the generalized Monge-Amp\`{e}re equation on almost K\"{a}hler manifolds.
  We establish a uniqueness up to the addition of a constant and local existence theorem for this equation.
  At last, we find an elliptical system for $\mathcal{D}^+_J$ operator.
  As an application, we reorganize the result of Tosatti-Weinkove-Yau in \cite{TWY}.
 \\

 \noindent {{\bf AMS Classification (2000):} 53D35; 53C56; 53C65; 32Q60.}\\

 \noindent {{\bf Keywords:} almost K\"{a}hler form, $\mathcal{D}^+_J$ operator, $\bar{\partial}$-problem in almost K\"{a}hler geometry,
          the generalized Monge-Amp\`{e}re equation.   }

 \section{Introduction}
   Yau's Theorem for closed K\"{a}hler manifolds \cite{Yau} asserts that one can prescribe the volume form
  of a K\"{a}hler metric within a given K\"{a}hler class.
  This result is fundamental in the theory of K\"{a}hler manifolds (cf. Calabi \cite{Calabi,Calabi2}) and has broad applications in both geometry and mathematical physics (cf.  Yau \cite{Yau1} ).

  More precisely, Yau's Theorem is stated as follows:
  Let $(M^{2n}, \omega, J, g)$ be a closed K\"{a}hler manifold of real dimension $2n$, where $\omega$ denotes the K\"{a}hler form, $J$ is an $\omega$-compatible complex structure, and $g(\cdot, \cdot) = \omega(\cdot, J \cdot)$.
  Given a smooth volume form $e^F \omega^n$ on $M^{2n}$, there exists a unique smooth function $\varphi$ that satisfies
  \begin{eqnarray}\label{Monge-Ampere}
    (\omega+\sqrt{-1}\partial_J\bar{\partial}_J\varphi)^n &=& e^F\omega^n, \nonumber\\
      \omega+\sqrt{-1}\partial_J\bar{\partial}_J\varphi  &>&  0, \,\,\,\sup_{M^{2n}} \varphi=0
  \end{eqnarray}
  as long as $F$ satisfies the necessary normalization condition that
  \begin{equation}
    \int_{M^{2n}}e^F\omega^n= \int_{M^{2n}}\omega^n.
  \end{equation}
   Equation (\ref{Monge-Ampere}) is known as the complex Monge-Amp\`{e}re equation for K\"{a}hler manifolds.
   The solution method involves the technique of continuity and requires detailed a priori estimates for the derivatives of $\varphi$ up to third order (cf. Yau \cite{Yau}).
   Finally, the uniqueness of the solution $\varphi$ was established with an elementary proof by Calabi \cite{Calabi2}.

   \vskip 6pt

  An alternative extension of Yau's Theorem arises in the case where $J$ is a non-integrable almost complex structure.
  Recall that an almost complex structure $J$ is said to be tamed by a symplectic form $\omega$ if the bilinear form $\omega(\cdot, J \cdot)$ is positive definite. The structure $J$ is termed compatible (or calibrated) with $\omega$ when the same bilinear form is symmetric,
    that is, when $\omega(\cdot, J \cdot) > 0$ and $\omega(J \cdot, J \cdot) = \omega(\cdot, \cdot)$ (cf. McDuff-Salamon \cite{MS}).
    In the 1990s, Gromov posed the following problem to P. Delano\"{e} \cite{Dela}:
    Let $(M^{2n}, \omega)$ be a closed symplectic manifold, $J$ an almost complex structure compatible with $\omega$, and $F$ a smooth function on $M^{2n}$ such that
    $$
      \int_{M^{2n}}e^F\omega^n= \int_{M^{2n}}\omega^n.
    $$
  Is it possible to find a smooth function $\varphi$ on $M^{2n}$ such that $\omega + dJ d\varphi$ is a symplectic form taming $J$ and satisfies
  \begin{equation}\label{Gro CY}
    (\omega+dJd\varphi)^n=e^F\omega^n ?
  \end{equation}
 However, P. Delano\"{e} \cite{Dela} showed that when $n = 2$, the answer to this question is negative. This result was later extended to all dimensions by Wang and Zhu \cite{WZ}.
 A key element of their findings is the construction of a smooth function $\varphi_0$ such that $\omega + dJ d\varphi_0$ lies on the boundary of the set of taming symplectic forms
  (so its $(1,1)$ part is semipositive definite but not strictly positive), and yet $(\omega + dJ d\varphi_0)^n > 0$.
  This outcome is possible because the $(2,0) + (0,2)$ part of $dJ d\varphi_0$ contributes a strictly positive amount.
  This suggests that the issue with Gromov's proposal is that the 2-form $dJ d\varphi$ is generally not of type $(1,1)$ with respect to $J$, due to $J$ not being integrable in general.
  Its $(1,1)$ part is given by
  $$
  \sqrt{-1}\partial_J\bar{\partial}_J\varphi =\frac{1}{2}(dJd\varphi)^{(1,1)}
  $$
  which agrees with the standard notation when $J$ is integrable.
  Thus, one can study the Monge-Amp\`{e}re equation for non-integrable almost complex structures.
  Several important works have been conducted when the base manifolds are almost complex.
  We refer to Harvey-Lawson \cite{HL}, Tosatti-Wang-Weinkove-Yang \cite{TWWY}, Chu-Tosatti-Weinkove \cite{CTW},
  and the references therein.

  \vskip 6pt

     To study the Donaldson tameness problem \cite{DonT},
     Tan, Wang, Zhou, and Zhu introduced the operator $\mathcal{D}^+_J$ on tamed closed almost complex four-manifolds $(M^4, J)$ in \cite{TaWaZhZh}.
     Using $\mathcal{D}^+_J$, they resolved this problem under the condition $h_J^- = b^+ - 1$.
     The operator $\mathcal{D}^+_J$ can be viewed as a generalization of $\partial_J \bar{\partial}_J$.
     Specifically, when $J$ is integrable, we have $\mathcal{D}^+_J = 2\sqrt{-1} \partial_J \bar{\partial}_J$.
     In \cite{WWZ}, Wang, Wang, and Zhu used the operator $\mathcal{D}^+_J$ to study the Nakai-Moishezon criterion for tamed almost Hermitian $4$-manifolds.
     In \cite{WZZ}, Wang, Zhang, Zheng, and Zhu studied the generalized Monge-Amp\`{e}re equation
       $$(\omega+\mathcal{D}^+_J(f))^2=e^F\omega^2$$
       on almost K\"{a}hler surfaces.
    It is well known that, in the four-dimensional case, the operator $d^-_Jd^*$ $d^-_J d^*$ is a self-adjoint, strongly elliptic linear operator and plays a key role in defining the operator $\mathcal{D}^+_J$.
     However, in higher dimensions, $d^-_Jd^*$ is no longer elliptic.
     Fortunately, through direct calculation, we find that the principal symbol of $d^-_Jd^*$ is the same as that of $\partial_J\partial_J^*+\bar{\partial}_J\bar{\partial}_J^*$
     on higher dimensional almost K\"{a}hler manifolds.
     Thus, the operator $d^-_Jd^*:  {\rm D}(d^-_Jd^*)\longrightarrow {\rm R}(d^-_Jd^*)$ is an invertible, formal self-adjoint, and nonnegative operator.
      Consequently, for any $f\in C^\infty(M)$, we have $d^-_JJdf\,\bot\,{\rm coker}(d^-_Jd^*)$, and
         there exists a unique $\sigma(f)\in{\rm D}(d^-_Jd^*)$ such that
         $$d^-_Jd^*\sigma(f)=d^-_JJdf.$$
    Using $d^-_Jd^*$, we can define the operator $\mathcal{D}^+_J$ on higher-dimensional almost K\"{a}hler manifolds as follows:
    \begin{eqnarray*}
      \mathcal{D}^+_J(f) &=& dJdf+dd^*\sigma(f) \\
       &=& dd^*(f\omega)+dd^*\sigma(f)
    \end{eqnarray*}
          satisfying $d^-_Jd^*(f\omega)+d^-_Jd^*\sigma(f)=0$.
      Let $\mathcal{W}_J(f)=d^*(f\omega)+d^*\sigma(f)$,
   then
 \begin{equation}
  \left\{
    \begin{array}{ll}
    d\mathcal{W}_J(f)=\mathcal{D}^+_J(f)& , \\
      &   \\
    d^*\mathcal{W}_J(f)=0 & ,
   \end{array}
  \right.
  \end{equation}
 which is an elliptic system.
  Notice that
       $d_J^-\mathcal{W}_J(f)=0$,
  similar to the $\bar{\partial}$-problem in classical complex analysis \cite{Hormander},  $(\mathcal{W}_J,d_J^-)$-problem is whether $\mathcal{W}_J(f)=A$ has a solution
  for any $A$ satisfying $d_J^-A=0$.
   By using $L^2$-method \cite{Hormander}, calculating the $L^2$-norm of $\mathcal{W}_J^*A$
    and applying Riesz Representation Theorem \cite{Yo},  we solve the $(\mathcal{W}_J,d^-_J)$-problem in Section \ref{preliminaries},
    which plays a crucial role in the subsequent study of the generalized Monge-Amp\`{e}re equation.

    \vskip 6pt

   Let $(M^{2n}, \omega, J, g)$ be a closed almost K\"{a}hler manifold of dimension $2n$.
   We define a generalized Monge-Amp\`{e}re equation on $M^{2n}$ as follows:
     \begin{equation}\label{equation}
       (\omega+\mathcal{D}^+_J(f))^n=e^F\omega^n
     \end{equation}
       for some real function $f\in C^\infty(M)$ such that
       $$
       \omega+\mathcal{D}^+_J(f)>0,
       $$
      where $F\in C^\infty(M)$ satisfies
      \begin{equation}\label{}
      \int_{M^{2n}}\omega^n=\int_{M^{2n}}e^F\omega^n.
     \end{equation}
   We establish a uniqueness up to the addition of a constant and local existence theorem for this equation in Section \ref{local theo}.

   \vskip 6pt

   In Section \ref{elliptical system}, we find an elliptical system for $\mathcal{D}^+_J$ operator.
    For any $f\in C^\infty(M)$, if $\omega_1=\omega+\mathcal{D}_J^+(f)>0$,
 one can define a family of $J$-compatible symplectic forms $\omega_t=t\omega_1+(1-t)\omega$, $0\leq t\leq1$
and  smooth functions $f_t\in C^\infty(M)$ by the following equations
    $$
    -\frac{1}{n}\Delta_{g_t}f_t=\frac{\omega_t^{n-1}\wedge(\omega_1-\omega)}{\omega_t^n}.
     $$
     Then
     \begin{equation}
    \omega_1-\omega=dJdf_t+da(f_t),
  \end{equation}
  where
   \begin{equation}
  \left\{
    \begin{array}{ll}
   d^{*_t}a(f_t)=0, & \\
     &   \\
     d^-_Ja(f_t)=-d^-_JJdf_t, &   \\
       &   \\
    \omega^{n-1}_t\wedge da(f_t)=0.
   \end{array}
  \right.
  \end{equation}
  As an application, we study the almost K\"{a}hler potentials $f_0$, $f_1$ and reorganize the result of Tosatti-Weinkove-Yau in \cite{TWY}.

   \vskip 6pt

   Finally,
  as in the K\"{a}hler case \cite{DonK}, on almost K\"{a}hler manifolds, we raise several existence question for four different kinds of special almost K\"{a}hler metrics on compact manifolds, working within a fixed symplectic class.

     {\bf 1)} Extremal almost K\"{a}hler metrics

     {\bf 2)} Constant Hermitian scalar curvature almost K\"{a}hler metrics

     {\bf 3)} Hermite-Einstein almost K\"{a}hler metrics with $c_1=0$, $c_1<0$ and $c_1>0$

     {\bf 4)} Hermite-Ricci solitons

 \section{Preliminaries}\setcounter{equation}{0}\label{preliminaries}

    Let $M$ be an almost complex manifold with an almost complex structure $J$. For any $x \in M$, $T_xM\otimes_\mathbb{R}\mathbb{C}$ which is the complexification of $T_xM$ can be decomposed as follows
   \begin{equation}\label{2eq1}
  T_xM\otimes_\mathbb{R}\mathbb{C}=T^{1,0}_x+T^{0,1}_x,
  \end{equation}
  where $T^{1,0}_x$ and $T^{0,1}_x$ are the eigenspaces of $J$ corresponding to the eigenvalues $\sqrt{-1}$ and $-\sqrt{-1}$, respectively.
   A complex tangent vector is of type $(1,0)$ (resp. $(0,1)$) if it belongs to $T^{1,0}_x$ (resp. $T^{0,1}_x$).
   Let $TM\otimes_\mathbb{R}\mathbb{C}$ be the complexification of the tangent bundle.
   Similarly, let $T^*M\otimes_{\mathbb{R}}\mathbb{C}$ denote the complexification of the cotangent bundle $T^*M$.
   $J$ can act on $T^*M\otimes_{\mathbb{R}}\mathbb{C}$ as follows:
   $$\forall \alpha\in T^*M\otimes_{\mathbb{R}}\mathbb{C},\,\,\, J\alpha(\cdot)=-\alpha(J\cdot).$$
   Hence $T^*M\otimes_{\mathbb{R}}\mathbb{C}$ has the following decomposition according to the eigenvalues $\mp\sqrt{-1}$:
   \begin{equation}\label{2eq2}
   T^*M\otimes_{\mathbb{R}}\mathbb{C}=\Lambda^{1,0}_J\oplus\Lambda^{0,1}_J.
   \end{equation}
   We define exterior bundle $\Lambda^{p,q}_J=\Lambda^p\Lambda^{1,0}_J\otimes\Lambda^q\Lambda^{0,1}_J$.
   Let $\Omega^{p,q}_J(M)$ denote the space of $C^\infty$ sections of the bundle $\Lambda^{p,q}_J$.
   The exterior differential operator acts on $\Omega^{p,q}_J$ as follows:
   \begin{equation}\label{2eq3}
   d\Omega^{p,q}_J\subset\Omega^{p-1,q+2}_J+\Omega^{p+1,q}_J+\Omega^{p,q+1}_J+\Omega^{p+2,q-1}_J.
   \end{equation}
  Therefore, the exterior differential operator $d$ decomposes as:
  \begin{equation}\label{2eq4}
  d=A_J\oplus\partial_J\oplus\bar{\partial}_J\oplus\bar{A}_J,
  \end{equation}
  where each component is a derivation with bi-degrees given by
  $$
  |A_J|=(-1,2),\,\,|\partial_J|=(1,0),\,\,|\bar{\partial}_J|=(0,1),\,\,|\bar{A}_J|=(2,-1).
  $$
  The operators $\partial_J$ and $\bar{\partial}_J$ are of the first order while $A_J$ and $\bar{A}_J$ are of zero order (see \cite{NW}).

   \vskip 6pt

    Now suppose that $(M^{2n},\omega,J,g)$ is a closed almost K\"{a}hler manifold of dimension $2n$.
     Let $\Omega^2_\mathbb{R}(M)$ denote the space of real smooth $2$-forms on $M^{2n}$, i.e.,
  the real $C^\infty$ sections of the bundle $\Lambda^2_\mathbb{R}(M)$.
    The almost complex structure $J$ acts on the space $\Omega_\mathbb{R}^2$ as an involution, defined by
  \begin{equation}\label{involution}
    \alpha \longmapsto \alpha(J\cdot,J\cdot), \quad \alpha\in\Omega_\mathbb{R}^2(M).
  \end{equation}
  This induces a decomposition of 2-forms into $J$-invariant and $J$-anti-invariant parts (see \cite{DonT}):
  \begin{equation*}
  \Omega_\mathbb{R}^2 = \Omega^+_J \oplus \Omega^-_J, \quad \alpha = \alpha_J^+ + \alpha_J^-
  \end{equation*}
  and the corresponding decomposition of vector bundles:
  \begin{equation}\label{J decomposition}
  {\Lambda}_\mathbb{R}^2={\Lambda}_J^+ \oplus {\Lambda}_J^-.
  \end{equation}
   We define the following operators:
   \begin{eqnarray}\label{2eq23}
  d^+_J&=&P^+_Jd: \,\,\,\Omega_\mathbb{R}^1\longrightarrow\Omega^+_J, \nonumber\\
  d^-_J&=&P^-_Jd: \,\,\,\Omega_\mathbb{R}^1\longrightarrow\Omega^-_J,
  \end{eqnarray}
  where $P^\pm_J: \Omega_\mathbb{R}^2\longrightarrow\Omega^\pm_J$.
  A differential $k$-form $B_k$ with $k\leq n$ is called primitive if $L^{n-k+1}_{\omega}B_k=0$, or equivalently, $\Lambda_{\omega} B_k=0$ (see \cite{TY,YanH}).
  Here, $L_{\omega}$ is the Lefschetz operator (see \cite{BrA,TY,YanH}), which acts on a $k$-form $A_k\in\Omega_\mathbb{R}^k(M)$ as:
  $$L_{\omega}(A_k)=\omega\wedge A_k.$$
  The dual Lefschetz operator is defined by $\Lambda_{\omega}:\Omega_\mathbb{R}^k(M)\rightarrow \Omega_\mathbb{R}^{k-2}(M)$, and it is a contraction map associated with the symplectic form $\omega$.
    We define the space of primitive $k$-forms as $\Omega^k_{0}(M)$.
    Specifically,
    $$\Omega^2_{0}(M)=\{\alpha\in \Omega_\mathbb{R}^2(M) \,|\, \omega^{n-1}\wedge\alpha=0\}.$$
    Thus,
     $$\Omega^2_{0}(M)=\Omega^-_J(M)\oplus \Omega^+_{J,0}(M)$$
     and
     \begin{eqnarray}\label{pri deco}
       \Omega_\mathbb{R}^2(M) &=& \Omega^2_1(M)\oplus\Omega^2_0(M) \nonumber\\
        &=& \Omega^2_1(M)\oplus \Omega^-_J(M)\oplus \Omega^+_{J,0}(M),
     \end{eqnarray}
     where $\Omega^+_{J,0}(M)$ is the space of the primitive $J$-invariant $2$-forms and
     $$\Omega^2_1(M):=\{f\omega \,|\, f\in C^\infty(M,\mathbb{R}) \}.$$

   \vskip 6pt

    Let
     $$
     d^-_Jd^*: \Omega^-_J(M)\longrightarrow \Omega^-_J(M),
      $$
      where $d^*=-*_{g}d*_{g}$ and $*_{g}$ is the Hodge star operator with respect to the metric $g$.
       For any $\alpha,\beta\in\Omega^-_J(M)$,
     it is straightforward to observe
     $$
    <d^-_Jd^*\alpha,\beta>_{g} = <dd^*\alpha,\beta>_{g} =<\alpha,dd^*\beta>_{g}=<\alpha,d^-_Jd^*\beta>_{g}.
     $$
     Hence, $d^-_Jd^*$ is a self-adjoint operator.
    If $\alpha\in \ker(d^-_Jd^*)\subset \Omega^-_J(M)$,
         $$
         0=<d^-_Jd^*\alpha,\alpha>_{g}=<d^*\alpha,d^*\alpha>_{g}.
         $$
     Thus, we obtain
     $$\ker(d^-_Jd^*)={\rm coker}(d^-_Jd^*)=\{\alpha\in\Omega^-_J(M) \,\,|\,\, d^*\alpha=0\}.$$
     By Weil's identity \cite{TY,Weil},
     $$
     *_{g}\alpha=\frac{1}{(n-2)!}\omega^{n-2}\wedge\alpha.
     $$
     So
      $$d*_{g}\alpha=\frac{1}{(n-2)!}\omega^{n-2}\wedge d\alpha=0.$$
      Therefore,
      \begin{eqnarray}
         \ker(d^-_Jd^*)={\rm coker}(d^-_Jd^*)&=& \{\alpha\in\Omega^-_J(M) \,\,|\,\, d^*\alpha=0\} \nonumber\\
         &=& \{\alpha\in\Omega^-_J(M) \,\,|\,\, \omega^{n-2}\wedge d\alpha=0\} .
      \end{eqnarray}
      If $n=2$, then $d^*\alpha=0$ implies $d\alpha=0$, meaning $\alpha$ is a $J$-anti-invariant harmonic $2$-form in $\Omega^-_J(M)$ (cf. \cite{Lej}).
     Therefore, $\ker(d^-_Jd^*)=\mathcal{H}^-_J=\mathcal{H}^2\cap\Omega^-_J$.
     If $n\geq 3$, it is clear that $\mathcal{H}^-_J\subsetneq\ker(d^-_Jd^*)$.

     Notice that $C^\infty(M)$ is dense in $L^2_2(M)$, so
     we can extend $d^-_Jd^*$ to become a closed, densely defined operator (see \cite{Hormander}),
     $$
     d^-_Jd^*: \Lambda^-_J \otimes L^2_2(M)\longrightarrow \Lambda^-_J \otimes L^2(M).
      $$
     In the sense of distributions, it is straightforward to see that
     $$\ker(d^-_Jd^*)=\{\alpha\in\Lambda^-_J\otimes L^2_2(M^{2n}) \,\,|\,\, d^-_Jd^*\alpha=0\}$$
        is closed.
      Specifically, let $\{\alpha_i\}$ be a sequence in $\ker(d^-_Jd^*)$ that converges in $L^2_2$ to some
      $\alpha\in \Lambda^-_J\otimes L^2_2(M)$.
      Then $\{d^-_Jd^*\alpha_i\}=\{0\}$ is a constant sequence that converges to $0$.
      Thus, $d^-_Jd^*\alpha=0$, and $\alpha\in \ker(d^-_Jd^*)$, since $d^-_Jd^*$ is a closed operator.
      Let $${\rm D}(d^-_Jd^*)=\Lambda^-_J \otimes L^2_2(M)\setminus \ker(d^-_Jd^*)$$
      and $${\rm R}(d^-_Jd^*)=\Lambda^-_J \otimes L^2(M)\setminus{\rm coker}(d^-_Jd^*).$$
      Then,
        $$d^-_Jd^*: {\rm D}(d^-_Jd^*)\longrightarrow {\rm R}(d^-_Jd^*)$$
         is invertible. For any
         $f\in
          L^2_2(M)$,
          by direct calculation and Proposition 1.13.1 in \cite{Gau}, we have
          $$
          Jdf=d^*(f\omega),\,\,d^-_JJdf=d^-_Jd^*(f\omega),\,\,d^+_JJdf=2\sqrt{-1}\partial_J\bar{\partial}_Jf.
          $$
         Note that
         $$d^-_JJdf=d^-_Jd^*(f\omega)\,\bot\,{\rm coker}(d^-_Jd^*),$$
        so there exists a unique $\sigma(f)\in{\rm D}(d^-_Jd^*)$ such that
         $$d^-_Jd^*(f\omega+\sigma(f))=0.$$
      We present the following theorem, which was proved by Lejmi in a $4$-dimensional case \cite{Lej}.
         \begin{theo}\label{elliptic operator}
          Let $(M^{2n}, \omega, J, g)$ be a closed almost K\"{a}hler manifold of dimension $2n$. The operator $d^-_Jd^*:  {\rm D}(d^-_Jd^*)\longrightarrow {\rm R}(d^-_Jd^*)$ is an invertible, formal self-adjoint, and nonnegative operator. Therefore, for any $f \in C^\infty(M)$, it holds that $d^-_JJdf\,\bot\,{\rm coker}(d^-_Jd^*)$ and
         there exists a unique $\sigma_f\in{\rm D}(d^-_Jd^*)$ such that
         $$d^-_Jd^*\sigma(f)=d^-_JJdf.$$
         \end{theo}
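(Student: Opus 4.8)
The plan is to reduce the statement to a single closed-range assertion. Formal self-adjointness and nonnegativity of $d^-_Jd^*$ on $\Omega^-_J(M)$, the identifications $\ker(d^-_Jd^*)=\mathrm{coker}(d^-_Jd^*)=\{\alpha\in\Omega^-_J(M):d^*\alpha=0\}$, and the closedness of this kernel in $\Lambda^-_J\otimes L^2_2(M)$ are all established in the discussion preceding the statement. So everything comes down to showing that $d^-_Jd^*:\mathrm D(d^-_Jd^*)\to\mathrm R(d^-_Jd^*)$ is bijective, where $\mathrm D$ and $\mathrm R$ are the complements (in $\Lambda^-_J\otimes L^2_2(M)$ and $\Lambda^-_J\otimes L^2(M)$) of the kernel and cokernel. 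Since $\mathrm R(d^-_Jd^*)=\mathrm{coker}(d^-_Jd^*)^\perp$ is exactly the closure of the range, this is precisely the statement that the range of $d^-_Jd^*$ is closed, after which the open mapping theorem supplies the bounded inverse. Writing $d^-_Jd^*=T^*T$ with $T:=d^*|_{\Omega^-_J}:\Omega^-_J(M)\to\Omega^1_{\mathbb R}(M)$ (so that $T^*=d^-_J$), closedness of the range of $d^-_Jd^*$ is equivalent to closedness of the range of $T$. Granting this, the final assertion is immediate: $Jdf=d^*(f\omega)$ gives $d^-_JJdf=d^-_Jd^*(f\omega)$, which is orthogonal to $\mathrm{coker}(d^-_Jd^*)$ (shown above), hence lies in $\mathrm R(d^-_Jd^*)$, and invertibility produces the unique $\sigma(f)\in\mathrm D(d^-_Jd^*)$ with $d^-_Jd^*\sigma(f)=d^-_JJdf$ (up to the sign convention that normalizes $\mathcal D^+_J$).

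To prove closed range I would work at the level of the principal symbol. For $\xi\in T^*_xM\setminus\{0\}$ and $\beta\in\Lambda^-_{J,x}$ one computes $\sigma_\xi(d^-_Jd^*)\beta=P^-_J(\xi\wedge\iota_{\xi^\sharp}\beta)$, and a short linear-algebra check --- the interior product of a $(2,0)$-form with a $(0,1)$-vector vanishes, and $P^-_J$ kills the $(1,1)$-part --- shows that this equals $\sigma_\xi(\partial_J\partial_J^*+\bar\partial_J\bar\partial_J^*)$; on $\Lambda^{2,0}_x$ it is $\beta\mapsto\xi^{1,0}\wedge\iota_v\beta$ with $v$ the metric dual of $\xi^{0,1}$, so $\sigma_\xi(d^-_Jd^*)=\tfrac12|\xi|^2\,\Pi_\xi$, where $\Pi_\xi$ is the orthogonal projection of $\Lambda^-_{J,x}$ onto the subspace whose $(2,0)$-part lies in $\xi^{1,0}\wedge\Lambda^{1,0}_x$. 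Thus $\Pi_\xi$ is a nonnegative self-adjoint endomorphism of \emph{constant rank} $2(n-1)$; it is the identity exactly when $n=2$, recovering the ellipticity of $d^-_Jd^*$ exploited by Lejmi \cite{Lej}. For $n\ge 3$ the operator is genuinely non-elliptic, but it is still formally self-adjoint, nonnegative, with constant-rank symbol, and the operator $d^-_Jd^*+P^-_Jd^*d$ on $\Omega^-_J(M)$ \emph{is} elliptic: its principal symbol is $|\xi|^2\,\mathrm{Id}_{\Lambda^-_J}$ and its quadratic form is $\langle(d^-_Jd^*+P^-_Jd^*d)\alpha,\alpha\rangle=\|d\alpha\|^2+\|d^*\alpha\|^2$, with finite-dimensional kernel $\mathcal H^-_J$. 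Running Hodge theory for this elliptic operator, and coupling its elliptic estimate with the almost K\"ahler identities (used to absorb the lower-order discrepancy between $d^-_Jd^*$ and $\tfrac12(\partial_J\partial_J^*+\bar\partial_J\bar\partial_J^*)$) together with the identity $\langle d^-_Jd^*\alpha,\alpha\rangle=\|d^*\alpha\|^2$ and Cauchy--Schwarz, one obtains an estimate $\|\alpha\|_{L^2_2}\le C(\|d^-_Jd^*\alpha\|_{L^2}+\|\alpha\|_{L^2})$ valid for $\alpha$ orthogonal to $\ker(d^-_Jd^*)$, which is the closed-range property.

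The main obstacle is precisely this last estimate in the non-elliptic range $n\ge 3$. One has to control the directions transverse to $\ker(d^-_Jd^*)=\{\alpha\in\Omega^-_J:d^*\alpha=0\}$, which --- in contrast to dimension four, where it reduces to $\mathcal H^-_J$ --- is infinite-dimensional and strictly contains $\mathcal H^-_J$. Moreover the elliptic completion $d^-_Jd^*+P^-_Jd^*d$ does not commute with $d^-_Jd^*$ once $J$ is non-integrable, so there is no simultaneous diagonalization; instead one runs the $L^2$-argument used in this section for the $(\mathcal W_J,d^-_J)$-problem (cf. \cite{Hormander}), exploiting the constant rank of $\Pi_\xi$ to show that $T=d^*|_{\Omega^-_J}$ is bounded below on the orthogonal complement of its kernel. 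The four-dimensional case of all this is due to Lejmi \cite{Lej}; the higher-dimensional argument differs only in that $\Pi_\xi$ is now a proper projection, which is exactly what the elliptic-completion step is designed to handle.
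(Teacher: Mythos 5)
Your reduction is the right one, and it is essentially the reduction the paper itself makes: formal self-adjointness, nonnegativity, and the identification $\ker(d^-_Jd^*)={\rm coker}(d^-_Jd^*)=\{\alpha\in\Omega^-_J\,:\,d^*\alpha=0\}$ are all contained in the discussion preceding the theorem; the final assertion about $\sigma(f)$ follows from $Jdf=d^*(f\omega)$ exactly as you say; and everything hinges on the closed-range property of $d^-_Jd^*=T^*T$ with $T=d^*|_{\Omega^-_J}$. Your symbol computation is also correct: $\sigma_\xi(d^-_Jd^*)$ is $\tfrac12|\xi|^2$ times a projection of constant rank $2(n-1)$ in the fibre $\Lambda^-_{J,x}$ of real dimension $n(n-1)$, so the operator is elliptic precisely when $n=2$.

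The gap is in the last step. The estimate $\|\alpha\|_{L^2_2}\le C(\|d^-_Jd^*\alpha\|_{L^2}+\|\alpha\|_{L^2})$ for $\alpha\perp\ker(d^-_Jd^*)$ \emph{is} the theorem, and the mechanism you propose does not deliver it. The elliptic completion $d^-_Jd^*+P^-_Jd^*d$ only gives $\|\alpha\|_{L^2_2}\le C(\|d^-_Jd^*\alpha\|_{L^2}+\|P^-_Jd^*d\alpha\|_{L^2}+\|\alpha\|_{L^2})$, and orthogonality of $\alpha$ to the infinite-dimensional space $\{d^*\beta=0\}$ gives no control on the extra term $\|P^-_Jd^*d\alpha\|_{L^2}$: the two operators do not commute, as you yourself note, and more importantly the orthogonal complement of $\ker(d^-_Jd^*)$ is not a space on which the symbol of $d^-_Jd^*$ becomes invertible. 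What the constant-rank hypothesis actually buys is a pseudodifferential projection $\Pi$ onto the symbol-kernel bundle and an estimate for $(I-\Pi)\alpha$; converting that into coercivity on the exact orthogonal complement of the exact kernel (which a priori differs from the range of $I-\Pi$ by more than a smoothing correction) is precisely the missing argument. In the flat integrable model the Fourier decomposition diagonalizes everything and the estimate is immediate frequency by frequency, but on a general almost K\"ahler manifold the lower-order terms mix the two subbundles, and this is where a real proof is needed. To be fair, the paper does not supply this step either: it asserts invertibility after the kernel/cokernel identification, citing only the coincidence of principal symbols with that of $\partial_J\partial_J^*+\bar\partial_J\bar\partial_J^*$. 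You have located the genuine difficulty more precisely than the source does, but you have not closed it.
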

     \begin{rem}
        Suppose that $(M^{2n}, \omega, J, g)$ is a closed almost K\"{a}hler manifold of dimension $2n$. Consider the second-order linear differential operator
     \begin{eqnarray}
   P_J: \Omega^2_0(M) & \rightarrow & \Omega^2_0(M) \nonumber \\
        \psi & \mapsto & \Delta_{g}\psi-\frac{1}{n}g(\Delta_{g}\psi,\omega)\omega, \nonumber
  \end{eqnarray}
  where $\Delta_g$ denotes the Riemannian Laplacian associated with the almost K\"{a}hler metric $g$ (with the convention that $\langle \omega, \omega \rangle_g = n$). The operator $P_J$ is a self-adjoint, strongly elliptic linear operator whose kernel consists of the primitive $g$-harmonic 2-forms (see Lejmi \cite{Lej}, Tan-Wang-Zhou \cite{TWZ0}).

  For the case $n = 2$, Lejmi demonstrated that $P_J$ preserves the decomposition
  $$\Omega^2_0=\Omega^+_{J,0}\oplus\Omega^-_J.$$
  and furthermore,  for $\psi \in \Omega^+_{J,0}$ and $\psi \in \Omega^-_J$, we have the relations
  $P_J|_{\Omega^+_{J,0}}(\psi)=\Delta_g\psi$ and
  $P_J|_{\Omega^-_J}(\psi)=2d^-_Jd^*\psi$.
  Lejmi also noted that $P_J |_{\Omega^-_J}$ is a self-adjoint, strongly elliptic operator acting from $\Omega^-_J$ to $\Omega^-_J$ on a closed almost Kähler 4-manifold.
      In the general case, when $n > 2$, the restriction
       $$P_J|_{\Omega^-_J}: \Omega^-_J\longrightarrow \Omega^2_0$$
       is an elliptic operator, as its symbol is injective, although not invertible \cite{TWY}.
      \end{rem}

         As established in Tan-Wang-Zhou-Zhu \cite{TaWaZhZh} and Wang-Wang-Zhu \cite{WWZ}, by applying Theorem \ref{elliptic operator}, we define the operator $\mathcal{D}^+_J$ on higher-dimensional almost K\"{a}hler manifolds.
          \begin{defi}\label{D operator defi}
          Let $\mathcal{D}^+_J$ be the operator defined follows
          $$
          \mathcal{D}^+_J: L^2_2(M)\longrightarrow \Lambda^+_J \otimes L^2(M),
          $$
          with
          $$\mathcal{D}^+_J(f)=dJdf+dd^*\sigma(f)=dd^*(f\omega)+dd^*\sigma(f),$$
          where $\sigma(f) \in \Omega_J^-(M^{2n})$ and satisfies the condition $$d^-_Jd^*(f\omega)+d^-_Jd^*\sigma(f)=0.$$
          Let $\mathcal{W}_J: L^2_2(M)\longrightarrow \Lambda_\mathbb{R}^1 \otimes L^2_1(M)$ be defined by
          $$\mathcal{W}_J(f)=d^*(f\omega+\sigma(f)).$$
          Then, we have the following relations: $$d\mathcal{W}_J(f)=\mathcal{D}^+_J(f) \text{ and  } d^-_J\mathcal{W}_J(f)=0.$$
          The function $f$ is called an almost K\"{a}hler potential with respect to the almost K\"{a}hler metric $g$.
          \end{defi}
          \begin{rem}
           If $J$ is integrable, i.e., $N_J=0$, then the operator $\mathcal{D}^+_J(f)$ simplifies into $$ 2\sqrt{-1}\partial_J\bar{\partial}_Jf. $$
       Thus, in the case of integrability, $\mathcal{D}^+_J$can be viewed as a generalized form of the $\partial\bar{\partial}$ operator.
       \end{rem}
        Denote the space of harmonic 2-forms by $\mathcal{H}^2_{dR}$ (cf. \cite{Cha}).
        Let $$\mathcal{H}^-_J=\mathcal{H}^2_{dR}\cap\Omega^-_J, \quad h^-_J=\dim\mathcal{H}^-_J,$$
        and
        $$\mathcal{H}^+_{J,0}=\mathcal{H}^2_{dR}\cap\Omega^+_{J,0}, \quad h^+_{J,0}=\dim\mathcal{H}^+_{J,0}.$$
        Here, $\mathcal{H}^-_J$ and $\mathcal{H}^+_{J,0}$ are the harmonic representations of their respective real de Rham cohomology groups on the manifold $M^{2n}$ (see  Draghici-Li-Zhang \cite{DLZ} for $n=2$).
        As in Theorem 4.3 for the complex de Rham cohomology groups in \cite{CW}, , we have the inequality
        $$
        h^-_J+h^+_{J,0}\leq b^2-1.
        $$
        If $h^-_J+h^+_{J,0}=b^2-1$, then
        $$
        \mathcal{H}^2_{dR}=Span\{\omega\}\oplus\mathcal{H}^+_{J,0}\oplus\mathcal{H}^-_J.
        $$
        Note that if $n=2$, then $\mathcal{H}^{2,0}_{dR}=\{0\}=\mathcal{H}^{0,2}_{dR}$ (cf. \cite[Lemma 5.6]{CW}).
       As in the case of almost K\"{a}hler 4-manifolds (see Tan-Wang-Zhang-Zhu \cite{TWZZ} and Tan-Wang-Zhou \cite{TWZ}), we can define
       $$
          \mathcal{H}^{\perp}_{J,0}=\mathbb{R}\cdot\omega\oplus\{\alpha_f=f\omega+d^-_J(v_f+\bar{v}_f)\in\mathcal{H}^2_{dR}\,\,|\,\, f\in C^\infty(M),\, v_f\in\Omega^{0,1}_J(M)\}.
          $$
        Then, $ h^{\perp}_{J,0}=\dim\mathcal{H}^{\perp}_{J,0}\geq1$, and we have the inclusion
         $$
         \mathcal{H}^{\perp}_{J,0}\oplus \mathcal{H}^+_{J,0}\oplus \mathcal{H}^-_J\subseteq\mathcal{H}^2_{dR}.
        $$
        It is straightforward to see that
        $$
        \ker\mathcal{W}_J=Span_{\mathbb{R}}\{f_2,\cdot\cdot\cdot,f_{h^{\perp}_{J,0}-1}\,\,|\,\,\alpha_{f_i}\in\mathcal{H}^{\perp}_{J,0}\}.
        $$

        If $n=2$, we have $\dim\mathcal{H}^{\perp}_{J,0}=b^+-\dim\mathcal{H}^-_J$, $0\leq\dim\mathcal{H}^-_J\leq b^+-1$. In this case, the direct sum is
        $$\mathcal{H}^{\perp}_{J,0}\oplus\mathcal{H}^+_{J,0}\oplus\mathcal{H}^-_J=\mathcal{H}^2_{dR},$$
        where $b^+$ is the self-dual second Betti number of $M^{4}$ (cf. Tan-Wang-Zhou \cite{TWZ}).

        If $n\geq 3$, for $\alpha_f\in\mathcal{H}^{\perp}_{J,0}$, we have $d^*\alpha_f=0$ and
        $$ d\alpha_f=\omega\wedge df+dd^-_J(v_f+\bar{v}_f)=0. $$
        Thus, we obtain
        \begin{eqnarray*}
          0 &=& d*_{g}\alpha_f \\
           &=&d*_{g}[f\omega+d^-_J(v_f+\bar{v}_f)]\\
           &=&d[\frac{1}{(n-1)!}f\omega^{n-1}+\frac{1}{(n-2)!}\omega^{n-2}\wedge d^-_J(v_f+\bar{v}_f)]\\
           &=&\frac{1}{(n-1)!}\omega^{n-1}\wedge df+\frac{1}{(n-2)!}\omega^{n-2}\wedge dd^-_J(v_f+\bar{v}_f)\\
          &=& \frac{1}{(n-1)!}\omega^{n-1}\wedge df-\frac{1}{(n-2)!}\omega^{n-2}\wedge \omega\wedge df\\
          &=&(\frac{1}{(n-1)!}-\frac{1}{(n-2)!})\omega^{n-1}\wedge df.
        \end{eqnarray*}
          By Corollary~2.7 in Yan \cite{YanH}, the operator $L_{\omega}^{n-1}:\Omega_\mathbb{R}^1\rightarrow\Omega_\mathbb{R}^{2n-1}$ is an isomorphism.
          Therefore, we conclude that $df=0$ and $f=c$.
          Consequently, $d^-_J(v_f+\bar{v}_f)\in\mathcal{H}^2_{dR}$, and further, $d^-_J(v_f+\bar{v}_f)\in\mathcal{H}^-_J$.
       Thus, we conclude that $d^-_J(v_f+\bar{v}_f)=0$, and therefore, $\mathcal{H}^{\perp}_{J,0}=\mathbb{R}\cdot\omega$.
       Hence, $\ker\mathcal{W}_J=\mathbb{R}$.

    \vskip 6pt

    Notice that
       $d_J^-\mathcal{W}_J(f)=0$,
  as $\bar{\partial}$-problem in classical complex analysis \cite{Hormander},  $(\mathcal{W}_J,d_J^-)$-problem is whether $\mathcal{W}_J(f)=A$ has a solution
  for any $A$ satisfying $d_J^-A=0$.
       If we can use the theory of Hilbert space, considering
  \begin{align}\label{A61}
   L_2^2(M)\stackrel{\mathcal{W}_J}\rightarrow \wedge_{\mathbb{R}}^1\otimes L_1^2(M)\stackrel{d_J^-}\rightarrow \wedge_J^-\otimes L^2(M),
   \end{align}
       then the above problem is equivalent to whether the kernel of $d_J^-$ is equal to the image of $\mathcal{W}_J$.
        Recall the definition of $\mathcal{W}_J$:
  $f\in L^2_2(M)$, $\sigma(f)\in{\rm D}(d^-_Jd^*)$ such that
  $$\mathcal{W}_J(f)=d^*(f\omega+\sigma(f))$$
  satisfying $d^-_J\mathcal{W}_J(f)=0$, $d^*\mathcal{W}_J(f)=0$
  and $$d\mathcal{W}_J(f)=d^+_J\mathcal{W}_J(f)\in\Lambda^+_J\otimes L^2(M).$$
   Without loss of generality, we may assume that if $A\in \Omega^1_{\mathbb{R}}(M)$, $d^*A=0$ and $d^-_JA=0$,
  then
   \begin{eqnarray*}
    <\mathcal{W}_J(f),A>_{g} &=& \int_{M^{2n}} A\wedge  *_{g}d^*(f\omega+\sigma(f))\\
    &=& -\int_{M^{2n}} A\wedge d(f\frac{\omega^{n-1}}{(n-1)!}+\sigma(f)\wedge\frac{\omega^{n-2}}{(n-2)!}) \\
     &=& -\int_{M^{2n}} dA\wedge(f\frac{\omega^{n-1}}{(n-1)!}+\sigma(f)\wedge\frac{\omega^{n-2}}{(n-2)!}) \\
     &=& -\int_{M^{2n}} d^+_JA\wedge f\frac{\omega^{n-1}}{(n-1)!}\\
     &=& <f, \mathcal{W}_J^*A>_{g}.
  \end{eqnarray*}
   Thus, the formal $L^2$-adjoint operator of $\mathcal{W}_J$ is
  \begin{equation}\label{formal ope}
 \mathcal{W}_J^*A=\frac{-n\omega^{n-1}\wedge d^+_JA}{\omega^n}=-(\Lambda_{\omega}d^+_JA).
  \end{equation}
   As done on closed almost K\"{a}hler $4$-manifolds, by using $L^2$-method \cite{Hormander}, calculating the $L^2$-norm of $\mathcal{W}_J^*A$
    and applying Riesz Representation Theorem \cite{Yo},
   it is easy to get the following theorem  (cf. \cite[Appendix A.3]{TaWaZhZh} or \cite[Section 6]{WWZ}):
   \begin{theo}\label{W,d-problem}
   Suppose that $(M^{2n},\omega,J,g)$ is a closed almost K\"{a}hler manifold of dimension $2n$.
         One can solve $(\mathcal{W}_J,d^-_J)$-problem.
    \end{theo}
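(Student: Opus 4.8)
The plan is to run the H\"{o}rmander-type $L^{2}$-argument used for closed almost K\"{a}hler $4$-manifolds in \cite[Appendix A.3]{TaWaZhZh} and \cite[Section 6]{WWZ}, now in dimension $2n$: transpose the equation $\mathcal{W}_{J}(f)=A$ to the formal adjoint $\mathcal{W}_{J}^{*}$ of (\ref{formal ope}) and recover $f$ by the Riesz representation theorem \cite{Yo}. As already noted in the paragraph preceding (\ref{formal ope}), it suffices to treat $A\in\Omega^{1}_{\mathbb{R}}(M)$ with $d^{*}A=0$ and $d^{-}_{J}A=0$, since $\mathrm{im}\,\mathcal{W}_{J}$ consists of co-exact $1$-forms. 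In the Hilbert setting of the complex (\ref{A61}), solvability of the $(\mathcal{W}_{J},d^{-}_{J})$-problem means that $\mathrm{im}\,\mathcal{W}_{J}$ is closed and that every admissible $A$ (orthogonal to the finite-dimensional obstruction) lies in it; by the $L^{2}$-method and the Riesz theorem (carried out below) this follows from the basic a priori estimate
\begin{equation}\label{Westimate}
\|A\|_{L^{2}}^{2}\ \le\ C\left(\|\mathcal{W}_{J}^{*}A\|_{L^{2}}^{2}+\|d^{-}_{J}A\|_{L^{2}}^{2}\right)
\end{equation}
for all co-closed $A$ orthogonal to $\mathcal{H}:=\{A\in\Omega^{1}_{\mathbb{R}}(M)\ :\ d^{*}A=0,\ d^{-}_{J}A=0,\ \mathcal{W}_{J}^{*}A=0\}$.

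First I would identify $\mathcal{H}$. If $A$ is co-closed with $d^{-}_{J}A=0$ then $dA=d^{+}_{J}A\in\Omega^{+}_{J}$ and, by (\ref{formal ope}), $\mathcal{W}_{J}^{*}A=-\Lambda_{\omega}d^{+}_{J}A=-\Lambda_{\omega}dA$; hence $\mathcal{W}_{J}^{*}A=0$ says exactly that $dA$ is a primitive (and exact) $2$-form. By Weil's identity \cite{TY,Weil} one has $*_{g}(dA)=\tfrac{1}{(n-2)!}\,\omega^{n-2}\wedge dA$, and since $d\omega=0$ and $d(dA)=0$ this gives $d^{*}(dA)=-*_{g}d*_{g}(dA)=-\tfrac{1}{(n-2)!}\,*_{g}\!\left(\omega^{n-2}\wedge d(dA)\right)=0$. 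Thus $dA$ is a harmonic exact $2$-form, so $dA=0$, and $A$ is a harmonic $1$-form; consequently $\mathcal{H}=\mathcal{H}^{1}$ is finite-dimensional and restricting to $\mathcal{H}^{\perp}$ costs nothing (these harmonic $1$-forms are precisely the ones that $\mathcal{W}_{J}$ cannot reach, $\mathrm{im}\,\mathcal{W}_{J}$ being co-exact).

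The main point is the estimate (\ref{Westimate}). Although $d^{-}_{J}d^{*}$ alone fails to be elliptic for $n\ge 3$, the combined first-order operator
$$
A\ \longmapsto\ \left(\mathcal{W}_{J}^{*}A,\ d^{-}_{J}A,\ d^{*}A\right)=\left(-\Lambda_{\omega}d^{+}_{J}A,\ d^{-}_{J}A,\ d^{*}A\right)\ :\ \Omega^{1}_{\mathbb{R}}\ \longrightarrow\ \Omega^{0}\oplus\Omega^{-}_{J}\oplus\Omega^{0}
$$
is overdetermined elliptic, with the same principal symbol as $\partial_{J}\partial_{J}^{*}+\bar{\partial}_{J}\bar{\partial}_{J}^{*}$ on $1$-forms, i.e.\ the ellipticity already underlying Theorem \ref{elliptic operator} and the definition of $\mathcal{D}^{+}_{J}$. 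Indeed, for $0\ne\xi\in T^{*}_{x}M$ and a real $\alpha=\alpha^{1,0}+\overline{\alpha^{1,0}}$: the vanishing of $P^{-}_{J}(\xi\wedge\alpha)$ says $\xi\wedge\alpha$ is of type $(1,1)$, which forces $\alpha^{1,0}=\mu\,\xi^{1,0}$ for some $\mu\in\mathbb{C}$; then $\Lambda_{\omega}(\xi\wedge\alpha)=0$ forces $\mu\in\mathbb{R}$, i.e.\ $\alpha=\mu\xi$; and $\langle\xi,\alpha\rangle=0$ forces $\alpha=0$. The resulting elliptic estimate $\|A\|_{L^{2}_{1}}\le C(\|\mathcal{W}_{J}^{*}A\|_{L^{2}}+\|d^{-}_{J}A\|_{L^{2}}+\|d^{*}A\|_{L^{2}}+\|A\|_{L^{2}})$ loses its $d^{*}A$-term for co-closed $A$, and a standard compactness argument promotes it to (\ref{Westimate}): if (\ref{Westimate}) failed, a sequence $A_{i}$ with $\|A_{i}\|_{L^{2}}=1$, $\mathcal{W}_{J}^{*}A_{i}\to 0$, $d^{-}_{J}A_{i}\to 0$ would, by the $L^{2}_{1}$-bound and the Rellich lemma, subconverge in $L^{2}$ to a unit vector of $\mathcal{H}$ orthogonal to $\mathcal{H}$ — impossible.

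With (\ref{Westimate}) in hand, $\mathrm{im}\,\mathcal{W}_{J}$ is closed, and for an admissible $A$ (co-closed, $d^{-}_{J}A=0$, orthogonal to $\mathcal{H}$) the functional $\mathcal{W}_{J}^{*}\psi\mapsto\langle A,\psi\rangle_{g}$ on $\{\mathcal{W}_{J}^{*}\psi\}\subset L^{2}(M)$ is well defined (since $\langle A,\psi_{\mathcal{H}}\rangle_{g}=0$) and bounded (by (\ref{Westimate})), so the Riesz representation theorem \cite{Yo} produces $f\in L^{2}(M)$ with $\langle f,\mathcal{W}_{J}^{*}\psi\rangle_{g}=\langle A,\psi\rangle_{g}$ for all $\psi$, i.e.\ $\mathcal{W}_{J}(f)=A$ weakly; applying the ellipticity of the system $d\mathcal{W}_{J}(f)=\mathcal{D}^{+}_{J}(f)$, $d^{*}\mathcal{W}_{J}(f)=0$ then bootstraps $f$ to $C^{\infty}(M)$. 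The step I expect to be the real obstacle is precisely the a priori estimate (\ref{Westimate}): in dimensions $2n\ge 6$ one cannot use $d^{-}_{J}d^{*}$ by itself but must exploit both the overdetermined ellipticity of the triple $(\mathcal{W}_{J}^{*},d^{-}_{J},d^{*})$ and the co-exactness of $\mathrm{im}\,\mathcal{W}_{J}$, all the while carrying along the finite-dimensional harmonic corrections ($\mathcal{H}^{1}$, and the pieces $\mathcal{H}^{-}_{J}$, $\mathcal{H}^{+}_{J,0}$ entering through $\sigma(f)$), exactly as in the construction of $\mathcal{D}^{+}_{J}$ itself.
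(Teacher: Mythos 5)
Your proposal is correct and follows essentially the route the paper intends: it computes the formal adjoint $\mathcal{W}_J^*A=-\Lambda_\omega d^+_JA$, establishes the basic $L^2$ a priori estimate, and concludes by the Riesz representation theorem, which is exactly the $L^2$-method the paper invokes by reference to the four-dimensional case; your added detail — that in dimension $2n\geq 6$ the estimate comes from the overdetermined ellipticity of the triple $(\Lambda_\omega d^+_J,\,d^-_J,\,d^*)$ on $1$-forms rather than from $d^-_Jd^*$ alone — is the right substitute for the $4$-dimensional ellipticity used in the cited appendices. The only slip is the sign in Weil's identity for a primitive $J$-invariant $2$-form, where $*_g(dA)=-\tfrac{1}{(n-2)!}\,\omega^{n-2}\wedge dA$; this does not affect the conclusion that $dA$ is harmonic and exact, hence zero.
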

   Suppose $\psi\in\Lambda^{1,1}_\mathbb{R}\otimes L^2(M)$ is $d$-exact, hence
   \begin{equation}\label{2eq24}
   \psi=d(u_{\psi}+\bar{u}_{\psi})=d^+_J(u_{\psi}+\bar{u}_{\psi}), \,\,\,i.e., \,\,\,d^-_J(u_{\psi}+\bar{u}_{\psi})=0,
   \end{equation}
   for some $u_{\psi}\in\Lambda^{0,1}_J\otimes L^2_1(M)$.
   By Theorem \ref{W,d-problem}, there exists $f\in  L_2^2(M)$ such that
   $\mathcal{W}_J(f)=u_{\psi}+\bar{u}_{\psi}$.
   Hence,
   $$
    \psi=d(u_{\psi}+\bar{u}_{\psi})=d\mathcal{W}_J(f)=\mathcal{D}^+_J(f).
   $$
   We summarize the above discussion to obtain the following corollary:
         \begin{col}\label{exact lemma}
       Suppose that $\psi\in\Lambda^+_J\otimes L^2(M)$ is $d$-exact,
 that is, there is $u_{\psi}\in\Lambda^{0,1}_J\otimes L^2_1(M)$ such that $\psi=d(u_{\psi}+\bar{u}_{\psi})$.
 Then $\psi$ is $\mathcal{D}^+_J$-exact, that is, there exists $f_{\psi}\in L^2_2(M)$ such that $\psi=\mathcal{D}^+_J(f_{\psi})$.
       \end{col}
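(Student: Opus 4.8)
The plan is to deduce this directly from the solvability of the $(\mathcal{W}_J,d^-_J)$-problem established in Theorem \ref{W,d-problem}, together with the defining identity $d\mathcal{W}_J=\mathcal{D}^+_J$ from Definition \ref{D operator defi}. In outline there are three steps, and none of them requires new analysis beyond what has already been set up.

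\emph{Step 1: produce an admissible $1$-form.} Set $A:=u_\psi+\bar u_\psi\in\Lambda^1_{\mathbb{R}}\otimes L^2_1(M)$, which is a real $1$-form since $u_\psi\in\Lambda^{0,1}_J\otimes L^2_1(M)$. By hypothesis $dA=\psi$. Because $\psi\in\Lambda^+_J\otimes L^2(M)$ is $J$-invariant, its $J$-anti-invariant part vanishes, so
$$
d^-_JA=P^-_J(dA)=P^-_J\psi=0 .
$$
Thus $A$ satisfies exactly the compatibility condition appearing in the $(\mathcal{W}_J,d^-_J)$-problem.

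\emph{Step 2: solve $\mathcal{W}_J(f_\psi)=A$.} Apply Theorem \ref{W,d-problem}: since $d^-_JA=0$, there exists $f_\psi\in L^2_2(M)$ with $\mathcal{W}_J(f_\psi)=A=u_\psi+\bar u_\psi$. The regularity indices are consistent: for $f_\psi\in L^2_2(M)$ one has $\sigma(f_\psi)\in{\rm D}(d^-_Jd^*)\subset\Lambda^-_J\otimes L^2_2(M)$, hence $\mathcal{W}_J(f_\psi)=d^*(f_\psi\omega+\sigma(f_\psi))\in\Lambda^1_{\mathbb{R}}\otimes L^2_1(M)$, matching $A$.

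\emph{Step 3: conclude.} By Definition \ref{D operator defi}, $d\mathcal{W}_J(f_\psi)=\mathcal{D}^+_J(f_\psi)$, and therefore
$$
\psi=dA=d\mathcal{W}_J(f_\psi)=\mathcal{D}^+_J(f_\psi),
$$
so $\psi$ is $\mathcal{D}^+_J$-exact, as claimed. The only substantive ingredient is Theorem \ref{W,d-problem}; the rest is unwinding definitions and checking that the function-space degrees line up, so I do not anticipate a genuine obstacle here — the hard part has already been absorbed into the $L^2$-solvability of the $(\mathcal{W}_J,d^-_J)$-problem (proved by computing the $L^2$-norm of $\mathcal{W}_J^*A$ and invoking the Riesz Representation Theorem). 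If one wanted to be fully careful, the single point meriting attention is the $L^2_2$-regularity of the solution $f_\psi$, which is guaranteed by elliptic regularity applied to the elliptic system $d\mathcal{W}_J(f)=\mathcal{D}^+_J(f)$, $d^*\mathcal{W}_J(f)=0$.
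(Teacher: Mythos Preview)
Your proposal is correct and follows essentially the same route as the paper: set $A=u_\psi+\bar u_\psi$, note that $d^-_JA=0$ because $\psi\in\Lambda^+_J$, invoke Theorem~\ref{W,d-problem} to solve $\mathcal{W}_J(f_\psi)=A$, and conclude via $d\mathcal{W}_J=\mathcal{D}^+_J$. The paper's argument is identical in substance, only more terse.
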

      With the above lemma, we can easily get the following proposition (cf. Tan-Wang-Zhou-Zhu \cite{TaWaZhZh}):
          \begin{prop}\label{closed range 2}
          Suppose that $(M^{2n},\omega,J,g)$ is a closed almost K\"{a}hler manifold of dimension $2n$.
        $\mathcal{D}^+_J: L^2_2(M)\longrightarrow\Lambda^+_J\otimes L^2(M)$ has closed range.
       \end{prop}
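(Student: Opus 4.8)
The plan is to deduce the closed range of $\mathcal{D}^+_J$ from the solvability of the $(\mathcal{W}_J, d^-_J)$-problem (Theorem \ref{W,d-problem}) together with the observation, already recorded in Corollary \ref{exact lemma}, that a $d$-exact form in $\Lambda^+_J \otimes L^2(M)$ lying in the image of $d^+_J$ acting on $\Lambda^{0,1}_J \oplus \Lambda^{1,0}_J$ is automatically $\mathcal{D}^+_J$-exact. Concretely, I would first identify the range of $\mathcal{D}^+_J$ as a subspace of $\Lambda^+_J \otimes L^2(M)$ that can be described intrinsically: since $\mathcal{D}^+_J(f) = d\mathcal{W}_J(f)$ and $\mathcal{W}_J(f) \in \Lambda^1_{\mathbb{R}} \otimes L^2_1(M)$ satisfies $d^-_J \mathcal{W}_J(f) = 0$ and $d^* \mathcal{W}_J(f) = 0$, every element of $\mathrm{R}(\mathcal{D}^+_J)$ is $d$-exact. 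Conversely, by Corollary \ref{exact lemma}, every $d$-exact element of $\Lambda^+_J \otimes L^2(M)$ — equivalently, every element of the form $d(u + \bar u)$ with $u \in \Lambda^{0,1}_J \otimes L^2_1(M)$ — is $\mathcal{D}^+_J$-exact. So the task reduces to showing that the space of such $d$-exact $J$-invariant $L^2$ two-forms is closed in $\Lambda^+_J \otimes L^2(M)$.

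The second step is to establish that closedness. I would invoke the standard Hodge-theoretic / functional-analytic fact that for a closed operator between Hilbert spaces whose adjoint has closed range, the operator itself has closed range; here the relevant operator is $d$ (or $d^+_J$) acting on the appropriate domain, and one applies the elliptic estimate for the system $\{d, d^*\}$ restricted to one-forms orthogonal to the harmonic space. More precisely, I would argue as follows: suppose $\psi_k = \mathcal{D}^+_J(f_k) \to \psi$ in $\Lambda^+_J \otimes L^2(M)$. By the solvability of the $(\mathcal{W}_J, d^-_J)$-problem and the orthogonal decomposition $\ker \mathcal{W}_J = \mathbb{R}$ (established above in the excerpt for $n \ge 3$; trivial normalization for $n=2$), I may choose representatives $f_k$ with $\int_M f_k\, \omega^n = 0$, so that $\|f_k\|_{L^2_2} \le C\|\mathcal{W}_J(f_k)\|_{L^2_1}$ via the closed-range property of $\mathcal{W}_J$ — which itself follows from Theorem \ref{W,d-problem} and the Riesz-representation argument recalled there. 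Then I would bound $\|\mathcal{W}_J(f_k) - \mathcal{W}_J(f_\ell)\|_{L^2_1}$ by $\|\psi_k - \psi_\ell\|_{L^2}$ using the elliptic a priori estimate for the first-order elliptic system $(d \oplus d^*)$ applied to the one-form $\mathcal{W}_J(f_k - f_\ell)$, which is coclosed, $d^-_J$-closed, and has $d$-image equal to $\psi_k - \psi_\ell$; this shows $\{\mathcal{W}_J(f_k)\}$ is Cauchy in $L^2_1$, hence converges to some $A$ with $\mathcal{W}_J(f) = A$ for a limiting $f \in L^2_2$ (using that $\mathcal{W}_J$ has closed range and closed graph), and therefore $\psi = dA = \mathcal{D}^+_J(f) \in \mathrm{R}(\mathcal{D}^+_J)$.

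The main obstacle I anticipate is making the elliptic estimate airtight, since $d^-_J d^*$ is \emph{not} elliptic in dimension $2n \ge 6$ (as emphasized in the introduction), so one cannot simply quote ellipticity of the operator defining $\sigma(f)$. The remedy is that one never needs ellipticity of $d^-_J d^*$ itself: the operator $\sigma \mapsto \mathcal{W}_J$ is controlled only through the $L^2$-isometry-type bounds coming from Theorem \ref{elliptic operator} (invertibility of $d^-_J d^*$ on $\mathrm{D}(d^-_J d^*)$) and the $L^2$-estimate on $\mathcal{W}^*_J A$ that underlies Theorem \ref{W,d-problem}, while the genuinely elliptic input is only the classical estimate for the pair $(d, d^*)$ on one-forms, which is available in all dimensions. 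A secondary subtlety is bookkeeping the finite-dimensional harmonic obstructions: one must verify that $\mathrm{R}(\mathcal{D}^+_J)$ is precisely the $L^2$-closure of the $d$-exact $J$-invariant forms and does not pick up additional cohomology; this is handled by the identification $\ker \mathcal{W}_J = \mathbb{R}$ and the decomposition of $\mathcal{H}^2_{dR}$ recorded just before Theorem \ref{W,d-problem}. Modulo these points, the argument is a routine transplant of the four-dimensional case in Tan-Wang-Zhou-Zhu \cite{TaWaZhZh}, with $d^-_J d^*$ replaced throughout by its higher-dimensional (non-elliptic but still invertible and self-adjoint) analogue.
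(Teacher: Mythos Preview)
Your proposal is correct and follows essentially the same route as the paper: both arguments apply the elliptic estimate for $(d,d^*)$ on one-forms to control $\mathcal{W}_J(f_k)$ (the paper bounds the sequence and extracts a weakly convergent subsequence, you argue Cauchy directly --- a cosmetic difference), and both finish by invoking Corollary~\ref{exact lemma} (equivalently Theorem~\ref{W,d-problem}) to exhibit the limit $\psi$ as $\mathcal{D}^+_J(f)$. Your extra step bounding $\|f_k\|_{L^2_2}$ via the closed range of $\mathcal{W}_J$ is unnecessary --- the paper never controls the $f_k$ themselves, only $\mathcal{W}_J(f_k)$ --- but it is not circular and does no harm.
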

       \begin{proof}
       Let $\{f_i\}$ be a sequence of real functions on $M^{2n}$ in $L^2_2(M)$.
 By Definition \ref{D operator defi}, $\{\mathcal{W}_J(f_i)\}$ is a sequence of real $1$-forms on $M^{2n}$ with coefficients in $L^2_1$ such that $$\psi_i=d\mathcal{W}_J(f_i)=\mathcal{D}^+_J(f_i)\in\Lambda^+_J\otimes L^2(M)$$
 is converging in $L^2$ to some $\psi\in\Lambda^+_J\otimes L^2(M)$.
 It is clear that $d^*\mathcal{W}_J(f_i)=0$ and $\mathcal{W}_J(f_i)$ is perpendicular to the harmonic $1$-forms.
 It is well known that $d+d^*$ is an elliptic system (cf. Donaldson-Kronheimer \cite{DK}).
 Hence there exists a constant $C$ such that
 \begin{eqnarray*}
    \|\mathcal{W}_J(f_i)\|^2_{L^2_1(M)} &\leq& C(\|d\mathcal{W}_J(f_i)\|^2_{L^2(M)}+\|d^*\mathcal{W}_J(f_i)\|^2_{L^2(M)}) \\
    &=& C\|d\mathcal{W}_J(f_i)\|^2_{L^2(M)}<Const.
 \end{eqnarray*}
Hence, $\{\mathcal{W}_J(f_i)\}$ is bounded in $L^2_1(M)$, so a subsequence of $\{\mathcal{W}_J(f_i)\}$ converges weakly in $L^2_1$ to some $\mathcal{W}_J\in\Lambda^1_\mathbb{R}\otimes L^2_1(M)$.
 Since $d\mathcal{W}_J(f_i)\in\Lambda^+_J\otimes L^2(M)$,
 it follows that $$d\mathcal{W}_J=\psi\in\Lambda^+_J\otimes L^2(M).$$
  By Lemma \ref{exact lemma}, there exists $f\in L^2_2(M)$ such that $\mathcal{D}^+_J(f)=d\mathcal{W}_J(f)=\psi$.
   This completes the proof of Proposition \ref{closed range 2}.
    \end{proof}
 \section{Generalized Monge-Amp\`{e}re Equation on Almost K\"{a}hler Manifolds}\setcounter{equation}{0}\label{local theo}

    Suppose that $(M^{2n},\omega,J,g)$ is a closed almost K\"{a}hler manifold of dimension $2n$.
    Define $$
          \mathcal{D}^+_J:C^\infty(M)\longrightarrow\Omega^+_J(M),
          $$
        by  $$\mathcal{D}^+_J(f)=d\mathcal{W}_J(f)=dJdf+dd^*\sigma(f)$$
        satisfying $d^-_J\mathcal{W}_J(f)=0$, where $\mathcal{W}_J(f)=Jdf+d^*\sigma(f)$ and $\sigma(f)\in \Omega^-_J(M)$.
       It is easy to see that $\mathcal{D}^+_J$ is a linear operator
       and $d^*\mathcal{W}_J(f)=0$ with the fact that
       $Jdf=d^*(f\omega)$.

   It is well known that Monge-Amp\`{e}re equations are a class of important fully nonlinear equations profoundly
   related to many fields of analysis and geometry (see \cite{FWW,Yau1}).
  The solution of the Monge-Amp\`{e}re equation has been studied extensively.
  There are many existence, uniqueness and
regularity results of the Monge-Amp\`{e}re equation under different conditions and we refer the
reader to Tosatti-Weinkove \cite{ToWe}, Chu-Tosatti-Weinkove \cite{CTW}, Demailly-Pali \cite{DP},
Pli\'{s} \cite{Pli}, Zhang-Zhang \cite{ZZ} and others.
 As done in almost K\"{a}hler surface in Wang-Zhang-Zheng-Zhu \cite{WZZ},
     we can define a generalized Monge-Amp\`{e}re equation on $(M^{2n},\omega,J,g)$,
     \begin{equation}\label{generalized MA}
       (\omega+\mathcal{D}^+_J(f))^n=e^F\omega^n
     \end{equation}
       for some real function $f\in C^\infty(M)$ with
       $$
       \omega+\mathcal{D}^+_J(f)>0,
       $$
      where $F\in C^\infty(M)$ satisfies
      \begin{equation}\label{}
      \int_{M^{2n}}\omega^n=\int_{M^{2n}}e^F\omega^n.
     \end{equation}
     If $J$ is integrable, $\mathcal{D}^+_J(f)=2\sqrt{-1}\partial_J\bar{\partial}_Jf$.
     Then Equation (\ref{generalized MA}) evolves into the famous Monge-Amp\`{e}re equation (cf. \cite{Calabi,Yau}),
     which is called the generalized Monge-Amp\`{e}re equation on almost K\"{a}hler $2n$-manifolds.

     By Definition \ref{D operator defi}, the generalized Monge-Amp\`{e}re equation (\ref{generalized MA}) is equivalent to the following Calabi-Yau equation for $1$-forms:
     \begin{equation}\label{}
        (\omega+da)^n=e^F\omega^n,
     \end{equation}
   where $a\in \Omega_\mathbb{R}^1(M)$, $d^*a=0$ and $d^-_Ja=0$.
  In fact, we can choose $a=\mathcal{W}_J(f)$, $f\in C^\infty(M)$.
  For form-type Calabi-Yau equations, see Weinkove \cite{Wein}, Tosatti-Weinkove-Yau \cite{TWY}, Fu-Wang-Wu \cite{FWW}, Delano\"{e} \cite{Dela}, Wang-Zhu \cite{WZ} and others.

       We want to consider the local theory of the Calabi-Yau equation on almost K\"{a}hler $2n$-manifolds (cf. \cite{Dela,WZ}).
  \begin{defi}\label{large defi}
   Suppose that $(M^{2n},\omega,J,g)$ is a closed almost K\"{a}hler manifold of dimension $2n$.
   The sets $A$, $B$, $A_+$ and $B_+$ are defined as follows:
   $$A:= \{u\in\Omega^1_\mathbb{R}(M)\,\,|\,\, d^-_Ju=0,\,d^*u=0\};$$
   $$A_+:= \{u\in A\,\,|\,\, \omega(u)=\omega+du>0\};$$
   $$B:= \{f\in C^\infty(M)\,\,|\,\, \int_{M^{2n}}f\omega^n= \int_{M^{2n}}\omega^n\}; $$
     $$B_+:= \{f\in B\,\,|\,\, f>0\,\,on\,\,M^{2n}\}.$$
   \end{defi}
     We define an operator $\mathcal{F}$ from $A$ to $B$ as follows:
   $$
   u\mapsto \mathcal{F}(u),
   $$
  where
  \begin{equation}\label{key equ1}
     \mathcal{F}(u)\omega^n=(\omega(u))^n,\,\,\,\omega(u)=\omega+du.
  \end{equation}
  Restricting the operator $ \mathcal{F}$ to $A_+$,
  we get $\mathcal{F}(A_+)\subset B_+$.
  Thus, the existence of a solution to Equation (\ref{generalized MA}) is equivalent to that the restricted operator
  \begin{equation}
    \mathcal{F}|_{A_+}: A_+\rightarrow  B_+
  \end{equation}
  is surjective.
  Moerover, we have the following result (cf. \cite[Proposition 5]{Dela} or \cite[Proposition 2.4]{WZ}):
  \begin{prop}\label{F inequ}
  Suppose that $(M^{2n},\omega,J,g)$ is a closed almost K\"{a}hler manifold of dimension $2n$.

  (1) If $u\in A_+$, then $\mathcal{F}(u)>0$;

  (2) define $$B_{\varepsilon}(0):=\{u\in A \,|\, \|u\|_{C^1}\leq \varepsilon\},$$
  where $C^1$ is $C^1$-norm introduced by the metric $g$;
  if $\varepsilon<< 1$, then $B_{\varepsilon}(0)\subset A_+$.
  \end{prop}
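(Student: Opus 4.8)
The plan is to prove the two claims essentially independently, both by direct local computation in normal-type coordinates, exploiting that $\mathcal{F}(u)\omega^n = (\omega+du)^n$ with $du\in\Omega^+_J(M)$ (indeed $du=\mathcal{D}^+_J(f)$ for the relevant $f$, so in particular $du$ is $J$-invariant), so that $\omega(u)=\omega+du$ is a genuine $(1,1)$-form with respect to $J$ at every point.

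For part (1): given $u\in A_+$, by definition $\omega(u)=\omega+du>0$, i.e. $\omega(u)$ is a positive $(1,1)$-form. The key point is the elementary linear-algebra fact that if $\eta$ is a positive $J$-invariant $2$-form on a $2n$-dimensional symplectic vector space with volume form $\omega^n/n! >0$, then $\eta^n$ is a positive multiple of $\omega^n$; concretely, at each point one may simultaneously diagonalize $\omega$ and $\omega(u)$ so that $\omega(u)^n/\omega^n = \prod_{i=1}^n \lambda_i$ with all $\lambda_i>0$. Hence $\mathcal{F}(u) = \omega(u)^n/\omega^n > 0$ pointwise on $M$. (This is where positivity of $\omega(u)$ as a $(1,1)$-form, rather than merely $\omega(u)^n>0$, is used — exactly the subtlety highlighted in the introduction regarding Gromov's proposal.)

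For part (2): the claim is an openness statement, namely $0$ is an interior point of $A_+$ in the $C^1$-topology on $A$. Since $\omega>0$ is a fixed positive $(1,1)$-form and positivity of a $(1,1)$-form is an open condition, there is a uniform $\varepsilon>0$ (by compactness of $M$, using a finite cover by trivializing charts and continuity of the eigenvalues of $\omega+du$ relative to $\omega$) such that $\|du\|_{C^0}\le\varepsilon$ forces $\omega+du>0$. Here one needs that for $u\in A$ the form $du=d^+_J u$ is automatically $J$-invariant (because $d^-_J u=0$), so that $\omega+du$ is again a $(1,1)$-form and the fiberwise positivity criterion applies; for a general $1$-form this would fail. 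Since $\|du\|_{C^0}\lesssim \|u\|_{C^1}$ with a constant depending only on $(M,g)$, choosing $\varepsilon\ll 1$ gives $B_\varepsilon(0)\subset A_+$.

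I expect part (1) to be entirely routine. The only mild care needed, and the closest thing to an obstacle, is in part (2): one must make sure the constant $\varepsilon$ is uniform over $M$, which follows from compactness, and one must invoke $d^-_J u=0$ at the right moment to guarantee $\omega+du$ stays of type $(1,1)$ so that the pointwise positivity argument is legitimate. Everything else is a direct estimate comparing the $C^0$-norm of $du$ with the $C^1$-norm of $u$.
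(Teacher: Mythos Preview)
Your argument is correct on both counts. Note, however, that the paper does not actually supply its own proof of this proposition: it merely states the result and refers the reader to \cite[Proposition~5]{Dela} and \cite[Proposition~2.4]{WZ}. Your write-up is the standard argument one expects in those references --- for (1), the observation that $d^-_J u=0$ forces $du\in\Omega^+_J$ so that $\omega(u)$ is a positive $(1,1)$-form and hence $\omega(u)^n/\omega^n=\prod\lambda_i>0$ after simultaneous diagonalization (exactly as in Lemma~\ref{point represent1} of the paper); for (2), openness of the positive cone in $\Lambda^{1,1}_{J,\mathbb{R}}$ combined with compactness of $M$ and the trivial estimate $\|du\|_{C^0}\le C\|u\|_{C^1}$. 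There is nothing to correct, and since the paper offers no alternative proof, there is no genuine comparison to make; your proposal simply fills in what the paper leaves to citation.
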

  Suppose that $u_0\in A_+$.
  By Proposition \ref{F inequ}, there exists a small neighborhood $U(u_0)$ in $A$ such that $U(u_0)\subset A_+$,
 and if $u_1\in A$, $\varepsilon<< 1$, we can obtain that
 $\mathcal{F}(u_0+\varepsilon u_1)>0$.

   By solvability of $(\mathcal{W}_J,d^-_J)$-problem (see Theorem \ref{W,d-problem}), for any $\phi\in A$, there exists $f_{\phi}\in C^\infty(M)$ such that $\mathcal{W}_J(f_{\phi})=\phi$.
   For any $u\in A_+$, since $A_+$ is an open subset of $A$, it is easy to see that the tangent space at $u$, $T_{u}A_+$, is $A$.
  For $\phi\in T_{u}A_+$, define $L(u)(\phi)$ by
  $$
  L(u)(\phi)=\frac{d}{dt}\mathcal{F}(u+t\phi)|_{t=0}.
  $$
  By simple calculation, we can obtain
  \begin{eqnarray*}
     L(u)(\phi)&=& \frac{n\omega(u)^{n-1}\wedge\mathcal{D}^+_J(f_{\phi})}{\omega^n}.
  \end{eqnarray*}
  It is easy to see that $L(u)$ is a linear elliptic system on $A$ (cf. \cite{Dela,WZ}).
  Moreover, we have $\ker L(u)=\{0\}$.
  Indeed, for $u\in A_+$,
  $$\omega(u)=\omega+du=\omega+\mathcal{D}^+_J(f_u)>0,$$ that is, a symplectic form compatible with $J$ on $M^{2n}$, where $\mathcal{W}_J(f_u)=u$.
  Let $g_u(\cdot,\cdot)=\omega(u)(\cdot,J\cdot)$. Then $(\omega(u),J,g_u)$ is an almost K\"{a}hler structure on $M^{2n}$.
  By the primitive decomposition,
  $$\Omega^+_J(M)=\Omega^2_1(u)(M)\oplus\Omega^+_{J,0}(u)(M),$$
 where $$\Omega^2_1(u):=\{f\omega(u) \,|\, f\in C^\infty(M) \},$$
 and
  $$\Omega^+_{J,0}(u)(M)=\{\alpha\in \Omega^+_J(M) \,|\, \omega(u)^{n-1}\wedge\alpha=0\}.$$
 If $ L(u)(\phi)=0$, then $\mathcal{D}^+_J(f_{\phi})$ is a primitive 2-form on $M^{2n}$.
  By Weil's identity \cite{TY},
  $$*_{g_u}\mathcal{D}^+_J(f_{\phi})=-\mathcal{D}^+_J(f_{\phi})\wedge\frac{\omega(u)^{n-2}}{(n-2)!}.$$
  Thus,
  \begin{eqnarray*}
    <\mathcal{D}^+_J(f_{\phi}),\mathcal{D}^+_J(f_{\phi})>_{g_u} &=&\int_{M^{2n}}\mathcal{D}^+_J(f_{\phi})\wedge *_{g_u}\mathcal{D}^+_J(f_{\phi}) \\
     &=& -\int_{M^{2n}}\mathcal{D}^+_J(f_{\phi})\wedge \mathcal{D}^+_J(f_{\phi})\wedge\frac{\omega(u)^{n-2}}{(n-2)!} \\
     &=& 0.
  \end{eqnarray*}
  Therefore,
  $\mathcal{D}^+_J(f_{\phi})=d\mathcal{W}_J(f_{\phi})=0$.
   On the other hand, we have
   $d^*\mathcal{W}_J(f_{\phi})=0$.
  So $\mathcal{W}_J(f_{\phi})$ is a harmonic $1$-form \cite{Cha}.
  By the definition of $\mathcal{W}_J(f_{\phi})$, we know that $\mathcal{W}_J(f_{\phi})$ is a $d^*$-exact form.
 Therefore, by Hodge decomposition, $\mathcal{W}_J(f_{\phi})=0$, that is, $\phi=0$.
 Hence, we have the following lemma (cf. \cite[Proposition 1]{Dela} or \cite[Lemma 2.5]{WZ}):
   \begin{lem}\label{nozero map}
   Suppose that $(M^{2n},\omega,J,g)$ is a closed almost K\"{a}hler manifold of dimension $2n$.
   Then the restricted operator
   $$ \mathcal{F}:A_+\longrightarrow B_+ $$
   is elliptic type on $A_+$.
   Moreover, the tangent map,
   $d\mathcal{F}(u)=L(u)$, of $\mathcal{F}$ at $u\in A_+$ is a linear elliptic system on $A$ and $\ker L(u)=\{0\}$.
  \end{lem}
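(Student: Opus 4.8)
The plan is to push both assertions through the solvability of the $(\mathcal{W}_J,d^-_J)$-problem (Theorem~\ref{W,d-problem}): this lets me replace a $1$-form $\phi\in A$ by a scalar potential $f_\phi$, and thereby reduce $L(u)$ to a single second-order scalar operator manufactured from $\mathcal{D}^+_J$. The first step is to compute the linearization. Differentiating $\mathcal{F}(u+t\phi)\,\omega^n=(\omega+d(u+t\phi))^n$ at $t=0$ gives $L(u)(\phi)\,\omega^n=n\,\omega(u)^{n-1}\wedge d\phi$ for $\phi\in T_uA_+=A$; since $d^-_J\phi=0$, Theorem~\ref{W,d-problem} supplies $f_\phi$ with $\mathcal{W}_J(f_\phi)=\phi$, so that $d\phi=d\mathcal{W}_J(f_\phi)=\mathcal{D}^+_J(f_\phi)$ and
$$
L(u)(\phi)=\frac{n\,\omega(u)^{n-1}\wedge\mathcal{D}^+_J(f_\phi)}{\omega^n}=\frac{\omega(u)^n}{\omega^n}\,\Lambda_{\omega(u)}\mathcal{D}^+_J(f_\phi).
$$
Since $d\omega(u)=0$, $L(u)(\phi)\,\omega^n=n\,d(\omega(u)^{n-1}\wedge\phi)$ integrates to zero, confirming $L(u)(\phi)\in T_{\mathcal{F}(u)}B$.

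For the assertion that $\mathcal{F}$ is of elliptic type on $A_+$, I would adjoin to $L(u)$ the two constraints $d^-_J\phi=0$ and $d^*\phi=0$ defining $A$ and check that the enlarged system on $\Omega^1_{\mathbb{R}}(M)$ has injective principal symbol (a determined square system when $n=2$ and an overdetermined one for $n\geq3$). The one nontrivial ingredient is the symbol of the scalar operator $f\mapsto\Lambda_{\omega(u)}\mathcal{D}^+_J(f)$: by the construction of $\mathcal{D}^+_J$ from the nonnegative, formally self-adjoint operator $d^-_Jd^*$ of Theorem~\ref{elliptic operator} — whose symbol coincides with that of $\partial_J\partial_J^*+\bar{\partial}_J\bar{\partial}_J^*$ — this symbol agrees with that of the $g_u$-Laplacian up to a nonzero factor, exactly as in the integrable case, where $\mathcal{D}^+_J(f)=2\sqrt{-1}\,\partial_J\bar{\partial}_Jf$ and $\Lambda_{\omega(u)}\mathcal{D}^+_J$ is a multiple of $\Delta_{g_u}$. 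This step, though notationally heavy, is routine and follows the symbol computations of Delano\"{e}~\cite{Dela} and Wang--Zhu~\cite{WZ}.

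The substantive point is $\ker L(u)=\{0\}$. Suppose $L(u)(\phi)=0$. Then $\omega(u)^{n-1}\wedge\mathcal{D}^+_J(f_\phi)=0$, so $\mathcal{D}^+_J(f_\phi)$, which is $J$-invariant by construction, is a primitive $2$-form for the almost K\"{a}hler structure $(\omega(u),J,g_u)$. Weil's identity \cite{TY,Weil} then gives $*_{g_u}\mathcal{D}^+_J(f_\phi)=-\mathcal{D}^+_J(f_\phi)\wedge\frac{\omega(u)^{n-2}}{(n-2)!}$, so that
$$
<\mathcal{D}^+_J(f_\phi),\mathcal{D}^+_J(f_\phi)>_{g_u}=-\int_M\mathcal{D}^+_J(f_\phi)\wedge\mathcal{D}^+_J(f_\phi)\wedge\frac{\omega(u)^{n-2}}{(n-2)!}.
$$
Because $\mathcal{D}^+_J(f_\phi)=d\mathcal{W}_J(f_\phi)$ is exact we have $d\mathcal{D}^+_J(f_\phi)=0$, and $\omega(u)=\omega+du$ is closed, so the integrand is $d\big(\mathcal{W}_J(f_\phi)\wedge\mathcal{D}^+_J(f_\phi)\wedge\frac{\omega(u)^{n-2}}{(n-2)!}\big)$ and the integral vanishes by Stokes' theorem. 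Hence $\mathcal{D}^+_J(f_\phi)=d\mathcal{W}_J(f_\phi)=0$. Finally, $\mathcal{W}_J(f_\phi)=d^*(f_\phi\omega+\sigma(f_\phi))$ is co-exact and $d\mathcal{W}_J(f_\phi)=0$, so $<\mathcal{W}_J(f_\phi),\mathcal{W}_J(f_\phi)>_g=<f_\phi\omega+\sigma(f_\phi),d\mathcal{W}_J(f_\phi)>_g=0$; therefore $\phi=\mathcal{W}_J(f_\phi)=0$, as desired.

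I expect the step needing the most care to be the ellipticity verification, because $A_+$ is not open in the space of all $1$-forms but is cut out by the (compatible, and for $n\geq3$ overdetermined) equations $d^-_J\phi=d^*\phi=0$, so "$L(u)$ elliptic on $A$" must be interpreted as ellipticity of the enlarged system $(L(u),d^-_J,d^*)$; establishing this rests on pinning down the principal symbol of $\mathcal{D}^+_J$, which is precisely where the almost K\"{a}hler hypothesis and the elliptic theory of $d^-_Jd^*$ (Theorem~\ref{elliptic operator}) enter. By contrast, once that symbol is in hand, the linearization formula, the Weil-identity computation, and the one-line Hodge-theoretic conclusion $\mathcal{W}_J(f_\phi)=0$ are short and self-contained.
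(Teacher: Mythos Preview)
Your proposal is correct and follows essentially the same route as the paper: both compute the linearization via the $(\mathcal{W}_J,d^-_J)$-problem to get $L(u)(\phi)=\dfrac{n\,\omega(u)^{n-1}\wedge\mathcal{D}^+_J(f_\phi)}{\omega^n}$, defer the ellipticity check to the symbol computations of \cite{Dela,WZ}, and kill the kernel by the Weil-identity argument showing $\mathcal{D}^+_J(f_\phi)=0$ followed by the observation that $\mathcal{W}_J(f_\phi)$ is simultaneously $d^*$-exact and $d$-closed, hence zero. Your treatment is in fact slightly more explicit than the paper's in two places---you spell out the Stokes step that makes the integral vanish, and you give a one-line pairing argument in place of the paper's appeal to Hodge decomposition---but these are cosmetic differences, not a different approach.
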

  Obviously, $A_+\subset A$ is a convex open set.
  Suppose that
  $$
  \mathcal{F}(u_0)=\mathcal{F}(u_1)
  $$
  for $u_0,u_1\in A_+$.
  Let $u_t=tu_1+(1-t)u_0$, $t\in[0,1]$.
  So
  $$
  \int_0^1\frac{d}{dt}[\mathcal{F}(u_t)]dt=0.
  $$
  Then
  $$
  L(u_0)(u_1-u_0)\omega^n=\int_0^1n \omega(u_t)^{n-1}dt\wedge\mathcal{D}^+_J(f_{u_1}-f_{u_0})=0.
  $$
  Therefore $\mathcal{F}:A_+\rightarrow B_+$ is an injectivity map, and by nonlinear analysis (cf. Aubin \cite{Au}),
  we obtain the following result (cf. \cite[Theorem 2]{Dela} or \cite[Proposition 2.6]{WZ}):
 \begin{theo}\label{diff map}
  Suppose that $(M^{2n},\omega,J,g)$ is a closed almost K\"{a}hler manifold of dimension $2n$.
  Then the restricted operator
  $$
  \mathcal{F}:A_+\rightarrow  \mathcal{F}(A_+)\subset B_+
  $$
  is a diffeomorphic map.
  \end{theo}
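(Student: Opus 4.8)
The plan is to combine the three facts we have already assembled about $\mathcal{F}\colon A_+\to B_+$: that it is an elliptic map whose linearization $L(u)=d\mathcal{F}(u)$ has trivial kernel at every $u\in A_+$ (Lemma~\ref{nozero map}), that it is globally injective (the convexity argument carried out just above via $u_t=tu_1+(1-t)u_0$), and that the domain $A_+$ is a convex open subset of the Fr\'echet space $A$. From these one wants to invoke the standard nonlinear-analysis package for maps of elliptic type (Aubin~\cite{Au}): such a map restricted onto its image is a diffeomorphism once one knows it is injective with everywhere-invertible differential. So the skeleton is: (i) upgrade ``$\ker L(u)=\{0\}$'' to ``$L(u)$ is an isomorphism onto the appropriate tangent space $T_{\mathcal{F}(u)}B_+\cong\{h\in C^\infty(M):\int_M h\,\omega^n=0\}$''; (ii) conclude local invertibility of $\mathcal{F}$ near each $u\in A_+$ via the implicit/inverse function theorem in the relevant tame Fr\'echet (or Banach-scale) category; (iii) patch the local inverses using global injectivity to obtain a global smooth inverse on $\mathcal{F}(A_+)$.

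For step~(i) the point is that $L(u)$, written as $\phi\mapsto n\,\omega(u)^{n-1}\wedge \mathcal{D}^+_J(f_\phi)/\omega^n$, is (modulo the isomorphism $\phi\leftrightarrow f_\phi$ supplied by Theorem~\ref{W,d-problem}) a second-order linear elliptic operator on functions, self-adjoint up to lower-order/zeroth-order terms, acting between the relevant mean-zero subspaces; its index is zero, so injectivity forces surjectivity. Concretely, on the almost K\"ahler manifold $(M,J,\omega(u),g_u)$ the operator $f\mapsto n\,\omega(u)^{n-1}\wedge\mathcal{D}^+_J(f)/\omega^n = \Lambda_{\omega(u)}\mathcal{D}^+_J(f)$ differs from the $g_u$-Laplacian only by terms coming from non-integrability of $J$ (the $\sigma(f)$ correction and the $(2,0)+(0,2)$ pieces), which are lower order; hence it is a Fredholm operator of index $0$ between $L^2_{k+2}$ and $L^2_k$ mean-zero Sobolev spaces. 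Trivial kernel (Lemma~\ref{nozero map}) then gives invertibility, and elliptic regularity promotes the inverse to the $C^\infty$ level.

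For step~(ii), with $L(u)$ an isomorphism for every $u\in A_+$, the inverse function theorem — either Banach-space version applied on each Sobolev completion together with elliptic bootstrapping, or the Nash–Moser version directly in the tame Fr\'echet setting — yields that $\mathcal{F}$ is a local $C^\infty$ diffeomorphism near every point of $A_+$. For step~(iii), injectivity of $\mathcal{F}$ on all of $A_+$ (already proved) means the locally defined inverses agree on overlaps, so they glue to a well-defined map $\mathcal{F}(A_+)\to A_+$ which is smooth because it is locally smooth; $\mathcal{F}(A_+)$ is open in $B_+$ since $\mathcal{F}$ is an open map (being a local diffeomorphism). Therefore $\mathcal{F}\colon A_+\to\mathcal{F}(A_+)$ is a bijective local diffeomorphism, i.e.\ a diffeomorphism.

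The main obstacle is step~(i): making rigorous that $L(u)$ is genuinely Fredholm of index zero in the almost K\"ahler (non-integrable) category. Unlike the K\"ahler case, $\mathcal{D}^+_J$ is not simply $2\sqrt{-1}\,\partial_J\bar\partial_J$; the defining correction $\sigma(f)$ is obtained by inverting $d^-_Jd^*$, which in dimension $2n>4$ is only elliptic in the directions of $\Omega^-_J$ and not a genuine ALE-type invertible operator in the naive sense — one must use Theorem~\ref{elliptic operator} to see that $\sigma$ depends smoothly and in a lower-order fashion on $f$. Once one is confident that the composite $f\mapsto\Lambda_{\omega(u)}\mathcal{D}^+_J(f)$ has the same principal symbol as $\Delta_{g_u}$ on functions, the index-zero Fredholm property and hence the full statement follow from the standard machinery; but checking that symbol computation carefully (and that the $\phi\leftrightarrow f_\phi$ correspondence is a topological isomorphism of the function spaces, using the closed-range Proposition~\ref{closed range 2}) is where the real work lies.
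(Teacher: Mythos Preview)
Your proposal is correct and follows essentially the same route as the paper: the paper establishes injectivity of $\mathcal{F}$ via the convexity argument with $u_t$, recalls from Lemma~\ref{nozero map} that $L(u)$ is elliptic with trivial kernel, and then simply writes ``By standard nonlinear analysis (cf.\ Aubin~\cite{Au})'' to conclude. What you have done is unpack that citation---Fredholm index zero for $L(u)$, the inverse function theorem for local invertibility, and gluing local inverses via global injectivity---so your outline is strictly more detailed than the paper's own argument, and your flagged ``main obstacle'' (the symbol computation for $\Lambda_{\omega(u)}\mathcal{D}^+_J$ and the topological isomorphism $\phi\leftrightarrow f_\phi$) is precisely the content that the paper defers to the references~\cite{Dela,WZ}.
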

   Let $F\in C^{\infty}(M)$ satisfying
   $e^F\in\mathcal{F}(A_+)\subset B_+$, and
   \begin{equation*}
       \int_{M^{2n}} \omega^n = \int_{M^{2n}} e^F \omega^n.
   \end{equation*}
    By the above theorem, there exists $u\in A_+$ such that $\mathcal{F}(u)=e^F$ and $\mathcal{F}(u)\omega^n=\omega(u)^n$.
    Then
    $$e^F\omega^n=\omega(u)^n=(\omega+du)^n=(\omega+d\mathcal{W}_J(f_u))^n=(\omega+\mathcal{D}^+_J(f_u))^n.$$
   Hence, with Theorem \ref{diff map}, we have the following local existence theorem for solution of generalized Monge-Amp\`{e}re equation
   on the closed almost K\"{a}hler manifold $(M^{2n},\omega,J,g)$.
   \begin{theo}\label{local existence theo}
   Suppose that $(M^{2n},\omega,J,g)$ is a closed almost K\"{a}hler manifold.
   Let $F\in C^{\infty}(M,\mathbb{R})$ satisfying
   $$
   \int_{M^{2n}}e^F\omega^n=\int_{M^{2n}}\omega^n,
   $$
   and
   $$
   e^F\in\mathcal{F}(A_+)\subset B_+.
   $$
   Then, there exists a smooth function $f\in C^\infty(M)$
   satisfying
   $$
   (\omega+\mathcal{D}^+_J(f))^n=e^F\omega^n.
   $$
   \end{theo}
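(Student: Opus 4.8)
The plan is to deduce this local existence theorem directly from the diffeomorphism property established in Theorem~\ref{diff map}, so that essentially no new analysis is required. First, I would unwind the hypotheses: we are given $F\in C^\infty(M)$ with $e^F\in\mathcal{F}(A_+)\subset B_+$ and the normalization $\int_M e^F\omega^n=\int_M\omega^n$, which is exactly the statement that $e^F$ lies in the image of the restricted operator $\mathcal{F}|_{A_+}\colon A_+\to B_+$. By Theorem~\ref{diff map}, $\mathcal{F}\colon A_+\to\mathcal{F}(A_+)$ is a diffeomorphism, so in particular it is surjective onto its image; hence there exists a (unique) $u\in A_+$ with $\mathcal{F}(u)=e^F$.

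Next I would translate the $1$-form $u$ back into an almost K\"ahler potential. Since $u\in A_+\subset A$, we have $d^-_J u=0$ and $d^*u=0$, so by the solvability of the $(\mathcal{W}_J,d^-_J)$-problem (Theorem~\ref{W,d-problem}), there exists $f\in C^\infty(M,J)$ with $\mathcal{W}_J(f)=u$. Then by Definition~\ref{D operator defi},
$$
du=d\mathcal{W}_J(f)=\mathcal{D}^+_J(f).
$$
Combining this with the defining relation \eqref{key equ1} for $\mathcal{F}$, namely $\mathcal{F}(u)\,\omega^n=(\omega+du)^n$, we obtain
$$
e^F\omega^n=\mathcal{F}(u)\,\omega^n=(\omega+du)^n=(\omega+\mathcal{D}^+_J(f))^n,
$$
which is precisely the desired equation. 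The condition $u\in A_+$ guarantees $\omega+du=\omega+\mathcal{D}^+_J(f)>0$, so $f$ is an admissible solution.

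Finally I would note that regularity is automatic: the membership $e^F\in\mathcal{F}(A_+)$ already produces $u$ in the smooth space $A_+$, and the preimage $f$ supplied by Theorem~\ref{W,d-problem} lies in $C^\infty(M,J)$ by elliptic regularity for the $(\mathcal{W}_J,d^-_J)$-system (the pair $d\mathcal{W}_J(f)=\mathcal{D}^+_J(f)$, $d^*\mathcal{W}_J(f)=0$ is elliptic). There is no genuine obstacle in this argument beyond what is already packaged into Theorems~\ref{W,d-problem} and~\ref{diff map}; the only point demanding a little care is checking that the $f$ obtained is independent of the choice in $\ker\mathcal{W}_J$ and that the hypothesis $e^F\in\mathcal{F}(A_+)$ is genuinely being used (it is essential, since $\mathcal{F}(A_+)$ is in general a proper open subset of $B_+$ — this is exactly the phenomenon behind the negative answer to Gromov's problem). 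Thus the theorem is a formal consequence of the structural results already in hand.
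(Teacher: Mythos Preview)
Your proposal is correct and follows essentially the same route as the paper: invoke Theorem~\ref{diff map} to produce $u\in A_+$ with $\mathcal{F}(u)=e^F$, then apply Theorem~\ref{W,d-problem} to write $u=\mathcal{W}_J(f)$ so that $\mathcal{D}^+_J(f)=du$ and the Monge--Amp\`ere equation follows from \eqref{key equ1}. The paper presents exactly this argument in the paragraph preceding the theorem statement, so there is nothing to add.
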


    \vskip 6pt

  The remainder of this section is devoted to studying a uniqueness theorem for the solution of generalized Monge-Amp\`{e}re equation
   on closed almost K\"{a}hler manifold $(M^{2n},\omega,J,g)$.
  If there are two solutions $f_0, f_1$ for equation (\ref{generalized MA}),
     then
     $$(\omega+\mathcal{D}^+_J(f_0))^n=(\omega+\mathcal{D}^+_J(f_1))^n=e^F\omega^n.$$
      Let $f_t=tf_1+(1-t)f_0$, $t\in[0,1]$.
      \begin{eqnarray*}
        \mathcal{D}^+_J(f_t) &=& d\mathcal{W}_J(f_t) \\
         &=& td\mathcal{W}_J(f_1)+(1-t)d\mathcal{W}_J(f_0) \\
         &=& td\phi_1+(1-t)d\phi_0 \\
         &=& d\phi_t,
      \end{eqnarray*}
     where $\phi_0=\mathcal{W}_J(f_0)$, $\phi_1=\mathcal{W}_J(f_1)$ and $\phi_t=t\phi_1+(1-t)\phi_0$ are all in $A_+$.
    So by the definition of operator $L$, we have
    \begin{eqnarray*}
      0 &=& \int_0^1\frac{d}{dt}(\omega+\mathcal{D}^+_J(f_t))^ndt \\
       &=&\int_0^1\frac{d}{dt}(\omega+d\phi_t)^ndt  \\
       &=&\int_0^1\frac{d}{dt}(\omega(\phi_t))^ndt  \\
       &=&\int_0^1\frac{d}{dt}(\mathcal{F}(\phi_t))^ndt  \\
       &=& L(\phi_0)(\phi_1-\phi_0)\omega^n.
    \end{eqnarray*}
   By Lemma \ref{nozero map}, we know that $\ker L(u)=\{0\}$ for $u\in A_+$.
   Hence, $\phi_1=\phi_0$ and $\mathcal{W}_J(f_1)=\mathcal{W}_J(f_0)$.
 So we obtain a uniqueness theorem for the  generalized Monge-Amp\`{e}re equation up to $\ker\mathcal{W}_J$ (cf. Calabi \cite{Calabi} and Weinkove \cite{Wein}).
 \begin{theo}\label{uniqueness}
  The generalized Monge-Amp\`{e}re equation  (\ref{generalized MA}) on almost K\"{a}hler $2n$-manifold has at most one solution up to $\ker\mathcal{W}_J$.
 \end{theo}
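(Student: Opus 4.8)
The plan is to reduce the uniqueness statement to the injectivity of the linearized operator $L(u)$, which was already established in Lemma \ref{nozero map}. Suppose $f_0$ and $f_1$ are two solutions of the generalized Monge-Amp\`{e}re equation (\ref{generalized MA}), so that
\[
(\omega+\mathcal{D}^+_J(f_0))^n=(\omega+\mathcal{D}^+_J(f_1))^n=e^F\omega^n.
\]
First I would set $\phi_i=\mathcal{W}_J(f_i)$ for $i=0,1$; by Definition \ref{D operator defi} these lie in $A$, and since $\omega+\mathcal{D}^+_J(f_i)=\omega+d\phi_i=\omega(\phi_i)>0$, both $\phi_0,\phi_1\in A_+$. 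The key structural point is that the path $\phi_t=t\phi_1+(1-t)\phi_0$ stays in the convex open set $A_+$ for all $t\in[0,1]$, and that $\mathcal{D}^+_J(f_t)=d\phi_t$ where $f_t=tf_1+(1-t)f_0$, because $\mathcal{W}_J$ is linear. Thus $\mathcal{F}(\phi_t)\omega^n=(\omega+d\phi_t)^n=(\omega+\mathcal{D}^+_J(f_t))^n$.

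Next I would differentiate along this path and integrate: since $\mathcal{F}(\phi_0)=\mathcal{F}(\phi_1)=e^F$, we get
\[
0=\int_0^1\frac{d}{dt}\,\mathcal{F}(\phi_t)\,dt
=\Big(\int_0^1 n\,\omega(\phi_t)^{n-1}\,dt\Big)\wedge\mathcal{D}^+_J(f_1-f_0)\Big/\omega^n.
\]
Pairing this with $\mathcal{D}^+_J(f_1-f_0)$ and using that $\omega(\phi_t)$ is, for each $t$, a symplectic form compatible with $J$, the averaged form $\int_0^1 n\,\omega(\phi_t)^{n-1}dt$ is a positive $(2n-2)$-form; hence the identity forces $\mathcal{D}^+_J(f_1-f_0)=d\,\mathcal{W}_J(f_1-f_0)=0$. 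Combined with $d^*\mathcal{W}_J(f_1-f_0)=0$ and the fact that $\mathcal{W}_J(f_1-f_0)$ is $d^*$-exact, the Hodge decomposition gives $\mathcal{W}_J(f_1-f_0)=0$, i.e. $f_1-f_0\in\ker\mathcal{W}_J$. Equivalently, one can phrase the last step as $L(\phi_0)(\phi_1-\phi_0)\omega^n=0$ and invoke $\ker L(\phi_0)=\{0\}$ from Lemma \ref{nozero map} directly, which is cleaner.

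The only genuine subtlety — and the step I would be most careful about — is the positivity/definiteness argument that turns the vanishing of the $t$-integral into the vanishing of $\mathcal{D}^+_J(f_1-f_0)$: one must check that the averaged coefficient form is strictly positive as a transverse form (so that wedging an arbitrary primitive-type $2$-form with it and integrating is a definite quadratic form), which is exactly where compatibility of each $\omega(\phi_t)$ with $J$ and the Weil identity are used, paralleling the $\ker L(u)=\{0\}$ computation already carried out before Lemma \ref{nozero map}. Everything else is a routine consequence of the linearity of $\mathcal{W}_J$, convexity of $A_+$, and the Hodge decomposition. This reproduces the classical Calabi uniqueness argument \cite{Calabi} and its symplectic analogues \cite{WZZ,Wein} in the present setting.
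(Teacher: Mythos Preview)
Your proposal is correct and follows essentially the same route as the paper: set $\phi_i=\mathcal{W}_J(f_i)\in A_+$, interpolate linearly, integrate the $t$-derivative to obtain $\int_0^1 n\,\omega(\phi_t)^{n-1}dt\wedge d(\phi_1-\phi_0)=0$, and then conclude $\phi_1=\phi_0$ via the $\ker L(u)=\{0\}$ argument of Lemma~\ref{nozero map}. If anything you are more careful than the paper, which writes the averaged operator as ``$L(\phi_0)$'' and invokes Lemma~\ref{nozero map} directly, whereas you correctly flag that one must rerun the Weil-identity positivity argument for the averaged form $\int_0^1\omega(\phi_t)^{n-1}\,dt$.
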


  \section{An elliptical system for $\mathcal{D}_J^+$ operator}\label{elliptical system}
  Suppose that $(M^{2n},\omega,J,g)$ is a closed almost K\"ahler manifold of dimension $2n$.
By the definition of $\mathcal{D}_J^+$, for any $f \in C^{\infty}(M)$,
    \begin{equation*}
            \mathcal{D}_J^+(f) = dJdf + dd^*\sigma(f),
    \end{equation*}
    where $d^*=-*_gd*_g$, $\sigma(f)\in\Omega^-_J(M)$ and
    \begin{equation*}
        d_J^-Jdf + d_J^-d^*\sigma(f) = 0.
    \end{equation*}
   If $\omega_1=\omega+\mathcal{D}_J^+(f)>0$, then $\omega_1$ is a $J$-compatible symplectic form.
   Let
   $$\omega_t=t\omega_1+(1-t)\omega,\,\,\,0\leq t\leq1.$$
   Then $\omega_t$ is a family of $J$-compatible symplectic forms in the same symplectic class $[\omega]$ and $g_t(\cdot,\cdot)=\omega_t(\cdot,J\cdot)$ is a family of almost K\"ahler metrics.
   It is easy to see that $(\omega,J,g)=(\omega_0,J,g_0)$.
   By direct calculation and Proposition 1.13.1 in \cite{Gau}, we have $d^{*_t}(f\omega_t)=Jdf$,   where $d^{*_t}=-*_{g_t}d*_{g_t}$.
   Then
   \begin{eqnarray}
      \mathcal{D}_J^+(f) &=& dJdf + dd^*\sigma(f) \nonumber \\
      &=& dd^{*_t}(f\omega_t)+da'_t(f),
   \end{eqnarray}
   where $a'_t(f)=d^*\sigma(f)+d\varphi_t$ satisfying
   $$d^{*_t}a'_t(f)=0,\,\,\,da'_t(f)=dd^*\sigma(f),$$
   and $\varphi_t\in C^{\infty}(M)$.
   Hence, $Jdf+a'_t(f)$ is $d^{*_t}$ closed.

   Define
   $$
    \Omega_{1}^{2,t}(M)=\{ f\omega_t\mid f\in C^{\infty}(M)\}.
   $$
    Recall the decomposition, we have
  \begin{equation*}
          \Omega^2(M) = \Omega_{1}^{2,t}(M)\oplus\Omega_{J}^-(M)\oplus\Omega_{J,0}^{+,t}(M)
  \end{equation*}
  where
  \begin{equation*}
      \Omega_{J,0}^{+,t}(M)=\{ \beta\in\Omega_J^+(M) \mid \omega_t^{n-1}\wedge \beta=0 \}.
  \end{equation*}
  A differential $k$-form $B_k$ with $k\leq n$ on $M^{2n}$ is called $\omega_t$-primitive if it satisfies $\omega_t^{n-k+1}\wedge B_k=0$ \cite{Weil,TY}.
   Let $B_k$ with $k< n$, then $dB_k=B^0_{k+1}+\omega_t\wedge B_{k-1}^1$ \cite[Lemma~2.4]{TY}. For $\omega_t$-primitive k-forms,
      \begin{equation}\label{eq:2.17}
          *_{g_t}\frac{1}{r!} \omega_t^r \wedge B_k = (-1)^{\frac{k(k+1)}{2}} \frac{1}{(n-k-r)!} \omega_t^{n-k-r}\mathcal{J}(B_k),
      \end{equation}
      where
      $$\mathcal{J} = \sum_{p,q} (\sqrt{-1})^{p-q} \prod^{p,q}$$
      projects a k-form on its (p,q) parts times the multiplicative factor $(\sqrt{-1})^{p-q}$ \cite{Weil,TY}. By (\ref{eq:2.17}), it follows that for any $\alpha \in \Omega_J^-(M)$ and $\beta \in \Omega_{J,0}^{+,t}(M)$, we have

      $$*_{g_t} \alpha = \frac{\omega_t^{n-2}}{(n-2)!} \wedge \alpha, \quad
      *_{g_t} \beta = -\frac{\omega_t^{n-2}}{(n-2)!} \wedge \beta.$$

    Define a smooth function $f_t\in C^\infty(M)$ as follows:
      \begin{equation}\label{def of ft}
    -\frac{1}{n}\Delta_{g_t}f_t=\frac{\omega_t^{n-1}\wedge(\omega_1-\omega)}{\omega_t^n},
      \end{equation}
        where $\Delta_{g_t}$ is the Laplacian of the Levi-Civita connection with respect to the almost K\"{a}hler metric $g_t$.
        In general, $f\neq f_0$.
        Using the result of Tosatti-Weinkove-Yau \cite[Lemma 2.5]{TWY},
   we can easily obtain
        $$
        -\omega_t^{n-1}\wedge dJdf_t=\frac{1}{n}\Delta_{g_t}f_t\cdot\omega_t^n.
        $$

  On the other hand, by a pair $(r,k)$ corresponding to the space
      \begin{equation} \label{eq:A.5}
          \mathcal{L}^{r,k}(t) = \left\{A\in\Omega^{2r+k}(M) \mid A=\frac{1}{r!}L_{\omega_t}^r B_k\text{ with }\Lambda_{\omega_t}B_k = 0\right\}.
      \end{equation}
      We have the result \cite{TY2} that $d$ acting on $\mathcal{L}^{r,k}$ leads to most two terms
      \begin{equation} \label{eq:A.6}
          d: \mathcal{L}^{r,k}(t)\longrightarrow \mathcal{L}^{r,k+1}(t)\oplus\mathcal{L}^{r+1,k-1}(t).
      \end{equation}
      Indeed we can define the decomposition of $d$ into linear differential operators $(\partial_{+,t}, \partial_{-,t})$ with respect to $\omega_t$ by writing
      \begin{equation*}
        d =\partial_{+,t}+L_{\omega_t}\partial_{-,t}.
      \end{equation*}
      By Lemma~2.5 in \cite{TY2}, we find that on a symplectic manifold $(M^{2n},\omega_t)$, the symplectic differential operator $(\partial_{+,t}, \partial_{-,t})$ satisfies the following:
      \begin{enumerate}[(i)]
          \item $(\partial_{+,t})^2 = (\partial_{-,t})^2 = 0$;
          \item $L_{\omega_t}(\partial_{+,t} \partial_{-,t}) = -L_{\omega_t}(\partial_{-,t} \partial_{+,t}) $;
          \item $[\partial_{+,t},L_{\omega_t}] = [L_{\omega_t} \partial_{-,t}, L_{\omega_t}] = 0$.
      \end{enumerate}
 For any $1$-form $b$, we have
 \begin{equation}\label{pri deco}
   db=\partial_{+,t}b+\omega_t\wedge\partial_{-,t}b,
 \end{equation}
  where $\partial_{+,t}b\in \Omega_{J}^-(M) \oplus \Omega_{J,0}^{+,t}(M)$ and $\partial_{-,t}b\in C^\infty(M)$.
  By (\ref{def of ft}),
  we have
  $$
    \frac{\omega_t^{n-1}\wedge d(Jdf+a_t'(f))}{\omega_t^n}=\frac{\omega_t^{n-1}\wedge(\omega_1-\omega)}{\omega_t^n}=-\frac{1}{n}\Delta_{g_t}f_t.
  $$
  By (\ref{pri deco}),  we have
  $$
  \omega_1-\omega=d(Jdf+a_t'(f))=\partial_{+,t}(Jdf+a_t'(f))+\omega_t\wedge\partial_{-,t}(Jdf+a_t'(f)).
  $$
  Then,
  we will get
  \begin{equation}\label{fs equu1}
    \partial_{-,t}(Jdf+a_t'(f))=-\frac{1}{n}\Delta_{g_t}f_t.
  \end{equation}
 Similarly,
   $$
   dJdf_t=\partial_{+,t}Jdf_t+\omega_t\wedge\partial_{-,t}Jdf_t.
   $$
 Moreover,
 \begin{equation}\label{fs equu2}
   P^+_J(\partial_{+,t}Jdf_t)\in\Omega_{J,0}^{+,t}(M),\,\,\,\partial_{-,t}Jdf_t=-\frac{1}{n}\Delta_{g_t}f_t.
 \end{equation}
 By (\ref{fs equu1}) and (\ref{fs equu2}),
 we have \begin{eqnarray*}
             \omega_1-\omega-dJdf_t&=& d(Jdf+a_t'(f))-dJdf_t\\
            &=&  \partial_{+,t}(Jdf+a_t'(f))-\partial_{+,t}Jdf_t,
         \end{eqnarray*}
 and
  $$
  P^-_J( \omega_1-\omega-dJdf_t)=-d^-_JJdf_t.
  $$
  Let $a(f_t)=Jdf+a_t'(f)-Jdf_t$.
  Then
  \begin{equation*}
    \omega_1-\omega=dJdf_t+da(f_t)= \mathcal{D}_{J,t}^+(f_t),\,\,\, 0\leq t\leq 1,
  \end{equation*}
  where
   \begin{equation*}
  \left\{
    \begin{array}{ll}
   d^{*_t}a(f_t)=0, & \\
     &   \\
     d^-_Ja(f_t)=-d^-_JJdf_t, &   \\
       &   \\
    \omega^{n-1}_t\wedge da(f_t)=0.
   \end{array}
  \right.
  \end{equation*}
 \begin{theo}\label{elliptical system theo}
 For any $f\in C^\infty(M)$, if $\omega_1=\omega+\mathcal{D}_J^+(f)>0$,
 one can define a family of $J$-compatible symplectic forms $\omega_t=t\omega_1+(1-t)\omega$, $0\leq t\leq1$
and  smooth functions $f_t\in C^\infty(M)$ by the following equations
    $$
    -\frac{1}{n}\Delta_{g_t}f_t=\frac{\omega_t^{n-1}\wedge(\omega_1-\omega)}{\omega_t^n}.
     $$
     Then
     \begin{equation}\label{}
    \omega_1-\omega=dJdf_t+da(f_t)=\mathcal{D}_{J,t}^+(f_t),
  \end{equation}
  where
   \begin{equation}\label{TWY5.4}
  \left\{
    \begin{array}{ll}
   d^{*_t}a(f_t)=0, & \\
     &   \\
     d^-_Ja(f_t)=-d^-_JJdf_t, &   \\
       &   \\
    \omega^{n-1}_t\wedge da(f_t)=0.
   \end{array}
  \right.
  \end{equation}
 \end{theo}

 \begin{rem}\label{key rem}
 By Theorem \ref{elliptic operator}, we can find $\sigma(f_t)\in\Omega^-_J(M)$ satisfying $$d^-_JJdf_t+d^-_Jd^{*_t}\sigma(f_t)=0.$$
 But in general, $a(f_t)\neq d^{*_t}\sigma(f_t)$.
 In fact, we have $$d^-_J(a(f_t)-d^{*_t}\sigma(f_t))=0.$$
 By Theorem \ref{W,d-problem}, one can find $f'_t\in C^\infty(M)$ such that $$a(f_t)-d^{*_t}\sigma(f_t)=\mathcal{W}_J(f'_t)=Jdf'_t+d^{*_t}\sigma(f'_t).$$
 Thus, $a(f_t)=d^{*_t}\sigma(f_t)+Jdf'_t+d^{*_t}\sigma(f'_t)$.
 \end{rem}

  \vskip 6pt

 The remainder of this section is devoted to studying the almost K\"{a}hler potentials $f_0$ and $f_1$.
  By elementary linear algebra, using simultaneous diagonalization (see McDuff-Salamon \cite{MS}),
 for any $p\in M^{2n}$,
      it is possible to find complex coordinates $z_1,\cdot\cdot\cdot,z_n$ on $M^{2n}$
  near $p$ such that:
      \begin{lem}\label{point represent1}
      ~
  $g(p)=2(|dz^1|^2+\cdot\cdot\cdot+|dz^n|^2)$;

   $g_1(p)=2(a_1|dz^1|^2+\cdot\cdot\cdot+a_n|dz^n|^2)$;

   $\omega(p)=\sqrt{-1}(dz^1\wedge d\bar{z}^1+\cdot\cdot\cdot+dz^n\wedge d\bar{z}^n)$;

      $\omega_1(p)=\sqrt{-1}(a_1dz^1\wedge d\bar{z}^1+\cdot\cdot\cdot+a_ndz^n\wedge d\bar{z}^n)$,
      where $0<a_1\leq\cdot\cdot\cdot\leq a_n$.
      \end{lem}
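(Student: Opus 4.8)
The plan is to reduce the statement to the standard fact that two positive Hermitian matrices can be simultaneously put into a normal form by a single complex-linear change of basis, and to invoke this fibrewise at the chosen point $p$. First I would fix $p\in M$ and work in the complexified tangent space $T_pM\otimes_{\mathbb R}\mathbb C$. Both $g(p)$ and $g_1(p)$ are $J$-invariant positive definite symmetric bilinear forms (since $\omega$ and $\omega_1$ are $J$-compatible), so each extends to a Hermitian inner product on $T_p^{1,0}M$ via the decomposition $(\ref{2eq1})$. By elementary linear algebra (this is exactly the simultaneous diagonalization statement cited from McDuff--Salamon \cite{MS}), there is a basis $e_1,\dots,e_n$ of $T_p^{1,0}M$ that is $g(p)$-orthonormal and $g_1(p)$-orthogonal, with $g_1(p)(e_i,\bar e_i)=a_i>0$; after relabeling we may assume $0<a_1\le\cdots\le a_n$. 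Let $z^1,\dots,z^n$ be the dual $(1,0)$-coframe at $p$, normalized so that $g(p)=2\sum|dz^i|^2$ (the factor $2$ is just the usual convention relating the Hermitian and Riemannian norms, consistent with $\langle\omega,\omega\rangle_g=n$ used earlier). Then automatically $g_1(p)=2\sum a_i|dz^i|^2$.

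The second step is to promote this linear-algebra normal form at $p$ to an actual coordinate system. Since $J$ is almost complex, one cannot in general find genuine holomorphic coordinates, but one does not need them: it suffices to choose real smooth coordinates $(x^i,y^i)$ centered at $p$ whose coordinate $1$-forms $dz^i=dx^i+\sqrt{-1}\,dy^i$ at $p$ span $\Lambda^{1,0}_J$ and agree with the dual coframe constructed above. Concretely, pick any local coordinates, push forward the frame constructed in the first step, and apply a constant linear change of coordinates so that $\partial/\partial z^i|_p=e_i$; the resulting $dz^i$ form a $(1,0)$-coframe \emph{at $p$}, which is all the lemma asserts (every identity in the statement is evaluated at $p$). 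With this coframe, the stated expressions for $g(p)$, $g_1(p)$, $\omega(p)=\sqrt{-1}\sum dz^i\wedge d\bar z^i$, and $\omega_1(p)=\sqrt{-1}\sum a_i\,dz^i\wedge d\bar z^i$ follow directly from $\omega=g(J\cdot,\cdot)$ (equivalently $g(\cdot,\cdot)=\omega(\cdot,J\cdot)$) and the analogous relation for $\omega_1$, since $\omega$ and $\omega_1$ are the $(1,1)$-forms associated to the two Hermitian metrics.

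I do not expect any serious obstacle here; the only point requiring mild care is bookkeeping the factor of $2$ and the sign/ordering conventions so that the four displayed formulas are mutually consistent (in particular that the same ordered eigenvalues $a_i$ appear in both $g_1(p)$ and $\omega_1(p)$), and emphasizing that — because $J$ need not be integrable — the $dz^i$ are a $(1,0)$-coframe only at the single point $p$, not in a neighborhood, so no Newlander--Nirenberg-type input is needed. This is precisely the $2n$-dimensional analogue of the well-known normalization used on K\"ahler manifolds in the proof of the Calabi--Yau estimates, and the argument is identical once one observes that integrability plays no role in a pointwise statement.
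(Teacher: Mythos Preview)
Your proposal is correct and follows precisely the approach the paper indicates: the paper does not give a detailed proof but simply prefaces the lemma with ``By elementary linear algebra and simultaneous diagonalization (see McDuff--Salamon \cite{MS})\ldots'', and your argument is exactly the standard unpacking of that citation. Your additional care in noting that the $dz^i$ need only be a $(1,0)$-coframe at the single point $p$ (so no integrability is required) is appropriate and consistent with how the lemma is used downstream.
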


        By using orthonormal coordinate system \cite{Cha} and the result of Tosatti-Weinkove-Yau \cite[Lemma 2.5]{TWY},
  we have
  \begin{equation}\label{}
    \Delta^c_gf_0(p)= \sqrt{-1}\sum^n_{j=1}(dJdf_0)^{(1,1)}(p)(v_j,\bar{v}_j),
  \end{equation}
  where $v_j=\partial/\partial z_j$ at $p$ and $\Delta^c_g$ is the complex Laplacian of the Hermitian canonical connection with respect to the almost K\"{a}hler metric $g$ at $p$.
  Also by Tosatti-Weinkove-Yau \cite[Lemma 2.6]{TWY}, since $(M^{2n},\omega,J,g)$ is a closed almost K\"{a}hler manifold,
  then
  \begin{equation}\label{}
   \Delta_{g}f_0=\Delta^c_gf_0,
  \end{equation}
  where $\Delta_{g}$ is the Laplacian of the Levi-Civita connection with respect to the almost K\"{a}hler metric $g$.
  Thus, we can relate $a_1,\cdot\cdot\cdot,a_n$ to $e^F$ and $ \Delta_{g}f_0=\Delta^c_gf_0$.
     If $\omega_1^n=e^F\omega^n$ (that is Equation (\ref{generalized MA})),
      we have the following lemma:
      \begin{lem}\label{point represent2}
      For any $p\in M^{2n}$, find complex coordinates $z_1,\cdot\cdot\cdot,z_n$ near $p$ such that:
     $$
   \Pi^n_{j=1}a_j=e^{F(p)},\,\,\,\frac{\partial^2f_0}{\partial z_j\partial \bar{z}_j}(p)=a_{j}-1,\,\,\,\frac{\partial^2f_0}{\partial z_i\partial \bar{z}_j}(p)=0\,\,\,for\,\,\, i\neq j
   $$
   and
   $$
   \Delta_{g}f_0(p)=n-\sum^n_{j=1}a_j.
   $$
     \end{lem}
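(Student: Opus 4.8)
The plan is to pick a point $p\in M$ and apply Lemma \ref{point represent1} to obtain complex coordinates $z_1,\dots,z_n$ near $p$ that simultaneously diagonalize $g$ and $g_1$ at $p$, so that $\omega(p)=\sqrt{-1}\sum dz^j\wedge d\bar z^j$ and $\omega_1(p)=\sqrt{-1}\sum a_j\, dz^j\wedge d\bar z^j$ with $0<a_1\le\cdots\le a_n$. First I would compute the top wedge powers at $p$: in these coordinates $\omega^n(p)=n!\,(\sqrt{-1})^n\,dz^1\wedge d\bar z^1\wedge\cdots\wedge dz^n\wedge d\bar z^n$ and likewise $\omega_1^n(p)=n!\,(\prod_j a_j)(\sqrt{-1})^n\,dz^1\wedge d\bar z^1\wedge\cdots$. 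Substituting into the Calabi--Yau equation $\omega_1^n=e^F\omega^n$ and evaluating at $p$ gives $\prod_{j=1}^n a_j=e^{F(p)}$, which is the first claimed identity.

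Next I would extract the Hessian statement. Since $\omega_1-\omega=\mathcal D^+_J(f)=dJdf_0+da(f_0)$, its $(1,1)$-part with respect to $J$ is $\sqrt{-1}\partial_J\bar\partial_J f_0+\big(da(f_0)\big)^{(1,1)}$. The key point is that the defining equation (\ref{ell system}) forces $a(f_0)$ to contribute nothing to the diagonal $(1,1)$-behaviour we are measuring: indeed $d^-_J a(f_0)=-d^-_J Jdf_0$ says the $J$-anti-invariant parts cancel, and $\omega^{n-1}\wedge da(f_0)=0$ says $\big(da(f_0)\big)^{(1,1)}$ is primitive, hence trace-free. Evaluating $\omega_1-\omega$ at $p$ in the chosen coordinates, the left side is $\sqrt{-1}\sum_j(a_j-1)\,dz^j\wedge d\bar z^j$, which is already of type $(1,1)$ and diagonal; comparing $(1,1)$-components and using the relation $\Delta^c_g f_0(p)=\sqrt{-1}\sum_j (dJdf_0)^{(1,1)}(p)(v_j,\bar v_j)$ together with $\partial_J\bar\partial_J f_0=\tfrac12(dJdf_0)^{(1,1)}$ yields $\tfrac{\partial^2 f_0}{\partial z_i\partial\bar z_j}(p)=a_j-1$ for $i=j$ and $=0$ for $i\neq j$. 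Summing the diagonal entries and invoking the identity $\Delta_g f_0=\Delta^c_g f_0$ from Tosatti--Weinkove--Yau \cite[Lemma 2.6]{TWY} gives $\Delta_g f_0(p)=\sum_j(a_j-1)=\big(\sum_j a_j\big)-n$, i.e.\ the last formula, up to a sign convention on $\Delta_g$ which matches (\ref{def f_0}).

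The main obstacle I expect is keeping the bookkeeping of conventions and of the $a(f_0)$-term honest: one must verify carefully that the off-diagonal and non-$(1,1)$ pieces of $da(f_0)$ do not spoil the identification of $\tfrac{\partial^2 f_0}{\partial z_i\partial\bar z_j}(p)$ with $a_j-1$, and that the primitivity condition $\omega^{n-1}\wedge da(f_0)=0$ really is what kills the contribution of $a(f_0)$ to the Laplacian term. This is where the previously established structure of the elliptic system (\ref{ell system}) and Proposition \ref{very key prop} do the real work; everything else is linear algebra at the point $p$ plus the two cited lemmas of Tosatti--Weinkove--Yau relating the Levi--Civita Laplacian, the canonical Laplacian, and the $(1,1)$-part of $dJdf_0$. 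Once those are in place the three displayed formulas follow by reading off coefficients in the diagonalizing frame.
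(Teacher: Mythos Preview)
Your plan follows essentially the paper's own approach: use Lemma~\ref{point represent1} for simultaneously diagonalizing coordinates, read off $\prod a_j=e^{F(p)}$ from $\omega_1^n=e^F\omega^n$, and obtain the Laplacian formula from the defining equation~(\ref{def f_0}) together with the primitivity condition $\omega^{n-1}\wedge da(f_0)=0$ and the identities $\Delta_g=\Delta_g^c$ from \cite[Lemmas~2.5--2.6]{TWY}. For the first and last displayed formulas your argument is correct and matches the paper.

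There is, however, a genuine gap in your derivation of the \emph{individual} Hessian entries $\frac{\partial^2 f_0}{\partial z_i\partial\bar z_j}(p)$. From $\omega_1-\omega=(dJdf_0)^{(1,1)}+(da(f_0))^{(1,1)}$ at $p$ you get
\[
(a_j-1)\delta_{ij}=\big[(dJdf_0)^{(1,1)}\big]_{i\bar j}(p)+\big[(da(f_0))^{(1,1)}\big]_{i\bar j}(p),
\]
but primitivity of $(da(f_0))^{(1,1)}$ only gives $\sum_j\big[(da(f_0))^{(1,1)}\big]_{j\bar j}(p)=0$, not that each entry vanishes. So you cannot conclude $\big[(dJdf_0)^{(1,1)}\big]_{i\bar j}(p)=(a_j-1)\delta_{ij}$ from this alone. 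Moreover, even identifying $\big[(dJdf_0)^{(1,1)}\big]_{i\bar j}(p)$ with $2\frac{\partial^2 f_0}{\partial z_i\partial\bar z_j}(p)$ requires care: in coordinates where $J(p)=J_{\mathrm{std}}$ but $dJ(p)\neq 0$, the two differ by first-order terms in $J$ contracted with $df_0$. The paper is admittedly terse on this point, and in fact the subsequent estimates (inequalities (\ref{leq n})--(\ref{Joc estimates 2}), Lemma~\ref{point represent3}) use only $\prod a_j=e^{F(p)}$ and $\Delta_g f_0(p)=n-\sum a_j$, never the entrywise Hessian claim. So your gap does not affect anything downstream, but as stated your argument for the middle identities is incomplete; you would need either a further normalization of the coordinates or to reinterpret the Hessian notation as shorthand for the components of $\omega_1-\omega$.
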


     By the previous lemmas, it is easy to obtain the following lemma:
      \begin{lem}\label{point represent3}
      At $p\in M^{2n}$, we have
   $$
      |d\mathcal{W}_J(f)(p)|^2_{g}=|\mathcal{D}^+_J(f)(p)|^2_{g}=2\sum^n_{j=1}(a_j-1)^2,\,\,\,|g_1(p)|^2_{g}=2\sum^n_{j=1}a_j^2
     $$
     and
     $$
     |g_1^{-1}(p)|^2_{g}=2\sum^n_{j=1}a_j^{-2}.
     $$
     \end{lem}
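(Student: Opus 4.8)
The plan is to reduce all three identities to one pointwise linear-algebra computation at $p$, carried out in the simultaneously adapted frame of Lemma \ref{point represent1}. First I would record that, by the standing hypothesis $\omega_1=\omega+\mathcal{D}^+_J(f)$ of this section and Definition \ref{D operator defi}, one has the form-level identity $d\mathcal{W}_J(f)=\mathcal{D}^+_J(f)=\omega_1-\omega$; hence it suffices to compute $|(\omega_1-\omega)(p)|^2_g$, $|g_1(p)|^2_g$ and $|g_1^{-1}(p)|^2_g$ in the coordinates provided by Lemma \ref{point represent1}.

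Next I would pass from the complex coframe $dz^1,\dots,dz^n$ of Lemma \ref{point represent1} to the associated real $g$-orthonormal coframe $\{e^1,f^1,\dots,e^n,f^n\}$, so that each $dz^j$ is a fixed scalar multiple of $e^j+\sqrt{-1}\,f^j$. In this coframe the normal forms of Lemma \ref{point represent1} read $g(p)=\sum_j\big((e^j)^2+(f^j)^2\big)$, $\omega(p)=\sum_j e^j\wedge f^j$, $g_1(p)=\sum_j a_j\big((e^j)^2+(f^j)^2\big)$ and $\omega_1(p)=\sum_j a_j\,e^j\wedge f^j$, so that $(\omega_1-\omega)(p)=\sum_j(a_j-1)\,e^j\wedge f^j$. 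Since the wedge products of distinct elements of an orthonormal coframe form an orthonormal basis of $\Lambda^2$, with the paper's normalization of $\langle\cdot,\cdot\rangle_g$ (summing over all ordered index pairs) this gives $|\mathcal{D}^+_J(f)(p)|^2_g=2\sum_{j=1}^n(a_j-1)^2$. For the metrics, the matrix of $g_1(p)$ relative to the orthonormal coframe is the diagonal matrix with entries $a_1,a_1,\dots,a_n,a_n$, hence $|g_1(p)|^2_g=\sum_j(a_j^2+a_j^2)=2\sum_{j=1}^n a_j^2$; likewise $g_1^{-1}(p)=\sum_j a_j^{-1}\big((e_j)^2+(f_j)^2\big)$ in the dual ($g$-orthonormal) frame, so $|g_1^{-1}(p)|^2_g=2\sum_{j=1}^n a_j^{-2}$.

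I expect no genuine obstacle here: once Lemmas \ref{point represent1} and \ref{point represent2} are available, the statement is pure pointwise bookkeeping. The one point deserving explicit care — and the reason the answer carries the factor $2\sum_j a_j^2$ rather than $\sum_j a_j^2$ — is consistency of conventions: the factors of $2$ appearing in Lemma \ref{point represent1} and the chosen normalization of the inner products on $\Lambda^2$ and on symmetric $2$-tensors must be tracked carefully so that they combine to the stated clean factor $2$ in each of the three formulas. As a sanity check I would also note that the same computation, combined with Lemma \ref{point represent2}, recovers $\Delta_g f_0(p)=n-\sum_j a_j$ from $-\tfrac{1}{n}\Delta_g f_0=\omega^{n-1}\wedge(\omega_1-\omega)/\omega^n$.
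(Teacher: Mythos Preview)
Your proposal is correct and matches the paper's approach: the paper gives no detailed argument for this lemma, stating only that it follows ``by the previous lemmas'' (i.e., Lemmas \ref{point represent1} and \ref{point represent2}), which is exactly the pointwise linear-algebra computation in the simultaneously diagonalizing coframe that you outline. Your explicit flag about tracking the normalization of $\langle\cdot,\cdot\rangle_g$ on $2$-forms and symmetric $2$-tensors (so that the factor $2$ appears uniformly) is the only genuine care point, and you have it right.
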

      By the first and last equations in Lemma \ref{point represent2} and the fact that the geometric mean $(a_1\cdot\cdot\cdot a_n)^{\frac{1}{n}}$
   is less than or equal to the arithmetic mean $\frac{1}{n}(a_1+\cdot\cdot\cdot +a_n)$, it is easy to obtain
   the following inequality:
   \begin{equation}\label{leq n}
     \Delta_{g}f_0(p)\leq n-ne^{\frac{F(p)}{n}}<n.
   \end{equation}

   \vskip 6pt

    Since any almost K\"{a}hler metric is Gauduchon,
    it is nature to extend Proposition $2.3$ in  \cite{CTW} to almost K\"{a}hler setting.
    \begin{prop}\label{int estimate}
  Let $(M^{2n},\omega,J,g)$ be a closed almost K\"{a}hler manifold.
  Then there is a constant $C>0$ depending only on $(M^{2n},\omega,J,g)$ such that every smooth function $f_0$ on $M^{2n}$
 which satisfies
 \begin{equation}\label{positive pop}
   \omega+\mathcal{D}^+_J(f)=\omega+dJdf_0+da(f_0)>0,\,\,\,\sup_{M^{2n}}f_0=0,
 \end{equation}
 also satisfies
 \begin{equation}\label{}
   \int_{M^{2n}}(-f_0)\omega^n<C.
 \end{equation}
    \end{prop}
     \begin{proof}
    Notice that $\Delta_g$ is the canonical Laplacian of $\omega$, which
  is an elliptic second order differential operator with kernel consisting of just constants.
     Standard linear PDE theory (cf. \cite[Appendix A]{AS}) shows that there exists a Green function $G$ for $\Delta_g$
     which satisfies $G(x,y)\geq -C$ and $ \|G(x,\cdot)\|_{L^1(M,g)}\leq C$ for a constant $C>0$,
     and
   \begin{equation}\label{Green formula}
     f_0(x)=\frac{1}{\int_{M^{2n}}\omega^n}\int_{M^{2n}} f_0\omega^n+\int_{M^{2n}}\Delta_gf_0(y)G(x,y)\omega^n(y)
   \end{equation}
     for all smooth functions $f_0$ and all $x\in M^{2n}$.
     On the other hand, with the third equation of (\ref{TWY5.4}), we have
     $$
     \int_{M^{2n}}\Delta_gf_0\omega^n=-n\int_{M^{2n}}\omega^{n-1}\wedge(dJdf_0+da(f_0))=0.
     $$
     Therefore, we are free to add a large uniform constant to $G(x,y)$ to make it nonnegative, while preserving the same Green formula.

     If $f_0$ satisfies (\ref{positive pop}), that is $\mathcal{D}^+_J(f)>-\omega$, then by (\ref{def of ft}), we have
    $$
    \Delta_gf_0< n.
    $$
     Let $x_0\in M^{2n}$ be a point such that $f_0(x_0)=0$.
     By Green formula (\ref{Green formula}), we obtain
     \begin{eqnarray*}
       \int_{M^{2n}}(-f_0)\omega^n &=&\int_{M^{2n}}\omega^n \cdot\int_{M^{2n}}\Delta_gf_0(y)G(x_0,y)\omega^n(y)\\
        &<& n\int_{M^{2n}}\omega^n \cdot \int_{M^{2n}}G(x_0,y)\omega^n(y)\\
         &<& C.
     \end{eqnarray*}
      \end{proof}
 Hence,  $\|f_0\|_{L^1(g)}$ is bounded.

      \vskip 6pt

      Formula (\ref{TWY5.4}) is similar to (5.4) in \cite{TWY}.
The kernel of (\ref{TWY5.4}) consists of the harmonic $1$-forms.
If $a(f_t)$ is in the kernel of (\ref{TWY5.4}), we have
\begin{eqnarray*}
  \|da(f_t)\|^2_{L^2(g_t)} &=&\int_{M^{2n}}da(f_t)\wedge *_tda(f_t) \\
   &=&  -\frac{1}{(n-2)!}\int_{M^{2n}}da(f_t)\wedge da(f_t)\wedge \omega^{n-2}_t\\
   &=&0.
\end{eqnarray*}
  Since $d^{*_t}a(f_t)=0$, we see that $a(f_t)$ is harmonic with respect to $g_t$.
 For $n=2$, one can use self-dual equation to prove Theorem \ref{elliptical system theo}.
Moreover,
  recall that $f_1$ is defined by
   $$-\frac{1}{n}\Delta_{g_1}f_1=\frac{\omega_1^{n-1}\wedge(\omega_1-\omega)}{\omega_1^n}$$
   which can be rewritten as
  \begin{equation}\label{lap trace}
  \Delta_{g_1}f_1=n-\frac{1}{2}{\rm tr}_{g_1}g.
  \end{equation}
  Notice that $f_1$ is the same as $\varphi$ defined in \cite{TWY}.
  As the discussion of Tosatti-Weinkove-Yau in \cite{TWY}, we have
  \begin{theo}\label{first main result}
  (cf. \cite[Theorem 1.3]{TWY})
 Let $(M^{2n},\omega,J,g)$ be a closed almost K\"{a}hler manifold of dimension $2n$.
 Let $\alpha>0$ be given.
 Let $F\in C^{\infty}(M)$ satisfying
   $$
   \int_{M^{2n}}e^F\omega^n=\int_{M^{2n}}\omega^n.
   $$
    Then if
    \begin{eqnarray*}
      \omega_1 &=& \omega+\mathcal{D}^+_J(f) \\
       &=&  dJdf_1+da(f_1)
    \end{eqnarray*}
    is an almost K\"{a}hler form
   cohomologous to $\omega$ solving
   $$
  \omega_1^n=e^F\omega^n,
   $$
there are  $C^{\infty}$ a priori bounds on $\omega_1$ depending only on $\omega,J,F,\alpha$ and
$$
  I_{\alpha}(f_1)=\int_{M^{2n}}e^{-\alpha f_1}dvol_g.
  $$
 \end{theo}

\section{Further Remarks and Questions}\setcounter{equation}{0}

    If $\omega$ is a K\"ahler form, let $K_{\omega}$ be the space of $\omega$-compatible complex  structures,
    $AK_{\omega}$  the space of $\omega$-compatible almost  complex structures.
    It is easy to see that $K_{\omega}\subsetneq AK_{\omega}$, where $AK_{\omega}$ is viewed as  a contractible Fr\'{e}chet manifold equipped with a formal  K\"{a}hler structure.
    Suppose that $(\omega,J,g)$ is an almost K\"{a}hler metric on closed manifold $M^{2n}$.
    Let $\mathcal{H}(\omega,J)$ be the almost K\"{a}hler potential space, that is,
    \begin{equation}\label{}
      \mathcal{H}(\omega,J):=\{f\in C^\infty(M)\,\,|\,\,\omega+\mathcal{D}^+_J(f)>0 \}.
    \end{equation}
    It is similar to the K\"{a}hler potential space, $\mathcal{H}(\omega,J)$ admits a natural Riemann metric of non-positive sectional curvature in the same sense \cite{DonSym}.
    Recall that, for K\"{a}hler setting, it was discovered by Mabuchi \cite{Mabu} and rediscovered by Semmes \cite{Se} and Donaldson \cite{DonSym}.
    The real numbers act on $\mathcal{H}(\omega,J)$, by addition of constants,
   and we define $\mathcal{M}_{[\omega]}=\mathcal{H}(\omega,J)/\mathbb{R}$, which can be viewed as the space of almost K\"{a}hler metrics on $M^{2n}$, in the given cohomology class.
    By the method of Mabuchi \cite{Mabu}, we can show that in fact $\mathcal{M}_{[\omega]}$ is isometric to the Riemann product $\mathcal{H}_0\times\mathbb{R}$,
      and  both he and Donaldson point out that understanding geodesics in these spaces is important for study of the space of almsot K\"{a}hler metrics.
      They raise the obvious question of wether any pair points in $\mathcal{H}(\omega,J)$ (or $\mathcal{M}_{[\omega]}$) can connected by a smooth geodesic.
       Lempert and Vivas give a negative answer in \cite{Lem}.
       Chen proves that the space is at least convex by $C^{1,1}$ geodesics \cite{Chen}.
       Darvas and Lempert show that the regularity that Chen obtains cannot be improved  \cite{DL}.
       Since $K_{\omega}\subsetneq AK_{\omega}$, and  $\mathcal{H}(\omega,J)$ is the almsot K\"ahler potential space,
        it is natural to investigate almost K\"{a}hler geometry by using $\mathcal{D}^+_J$ operator and the generalized Monge-Amp\`{e}re equation.
        For almost K\"{a}hler setting, geodesics in $\mathcal{H}(\omega,J)$ are related to a generalized Monge-Amp\`{e}re equation (cf. \cite{Se}) as follows.
        Let $S=\{s\in \mathbb{C}\,\,|\,\,0<{\rm Im}s<1 \}$, and let $\tilde{\omega}$ the pullback of $\omega$ by the projection
        $\bar{S}\times M^{2n}\rightarrow M^{2n}$.
         With any smooth curve $[0,1]\ni t\mapsto v_t\in\mathcal{H}(\omega,J)$,
         associate the smooth function $u(s,x)=v_{{\rm Im}s}(x)$, $(s,x)\in\bar{S}\times M^{2n}$.
       Then $t\mapsto v_t$ is a geodesic if and only if $u$ satisfies
       \begin{equation}\label{}
         (\omega+\mathcal{D}^+_{\tilde{J}}(u))^{n+1}=0,
       \end{equation}
       where $\tilde{J}=(J_{st},J)$ is an almost complex structure on $\mathbb{C}\times M^{2n}$,  $\tilde{J}$ is the standard complex structure.
       In order to study almost K\"{a}hler geometry, we need extend Futaki invariant \cite{Fut}, Mabuchi functional \cite{Mabu0,Mabu}, Tian $\alpha$-integral \cite{Tian}
       and Ding functional \cite{Ding}.

    As in K\"{a}hler case \cite{DonK}, on almost K\"{a}hler manifolds, we can consider existence question for four different kinds of special almost K\"{a}hler metrics,
    working within a fixed symplectic class on a compact manifold.

     {\bf 1)} Extremal almost K\"{a}hler metrics

     As in K\"{a}hler setting, extremal almost K\"{a}hler metrics are critical points of Calabi functional
     \begin{equation}\label{}
        (\omega,J)\mapsto\int_{M^{2n}}R(J)^2\frac{\omega^n}{n!},
     \end{equation}
     where $R(J)$ is the Hermitian scalar curvature with respect the almost K\"{a}hler metric.
     It is interesting to consider the uniqueness of extremal almost K\"{a}hler metrics.
     For K\"{a}hler case, see \cite{BB}.
     Also, one can consider the relation between extremal K\"{a}hler metrics and
     extremal almost K\"{a}hler metrics.
         In the toric case, the existence of an extremal K\"{a}hler metric is conjecturallye equivalent to the existence of nonintegrable extremal almost K\"{a}hler metric \cite{DonS} (see also \cite{ACG}).

    {\bf 2)} Constant Hermitian scalar curvature almost K\"{a}hler metrics  (CHSC for short)

     These metrics are just those whose Hermitian scalar curvature $R$ is constant.
    The constant Hermitian scalar curvature almost K\"{a}hler metrics are centainly extremal.
     Recently, Keller and Lejmi study $L^2$-norm of the Hermitian scalar curvature \cite{KL}.

    {\bf 3)} Hermite-Einstein almost K\"{a}hler metrics with $c_1=0$, $c_1<0$ and $c_1>0$

     An almost K\"{a}hler metric $(\omega,J,g)$ is called Hermite-Einstein (HEAK for short) if the Hermite-Ricci form $\rho$
              is a (constant) multiple of the symplectic form $\omega$, i.e.
                $$\rho=\frac{R}{2n}\omega,$$
      so the Hermitian scalar curvature $R$ is constant (cf. \cite{Lej2}).
      The form $\rho$
           is a de Rham representative of $2\pi c_1(M,J)$ in $H^2(M,\mathbb{R})$
      where $c_1(M,J)$ is the first (real) Chern class.
       If we suppose that $\omega$ and $\tilde{\omega}$ are symplectic forms compatible with the same almost-complex structure
          $J$ and satisfy $\tilde{\omega}^n=e^F\omega^n$ for some real-valued function $F$ then
  \begin{equation}\label{}
    \tilde{\rho}=\rho-\frac{1}{2}dJdF,
  \end{equation}
  where $\tilde{\rho}$ (resp. $\rho$) is the Hermite-Ricci form of $(\tilde{\omega},J,\tilde{g})$ (resp. $(\omega,J,g)$).
  Let $v=\frac{\tilde{\omega}^n}{\omega^n}$, then $F=\log v$.
  If we choose $\tilde{\omega}=\omega+\mathcal{D}^+_J(f)$, $f\in C^\infty(M)$,
  then the generalized Monge-Amp\`{e}re operator is
  \begin{eqnarray}
    v &=& M(f) \nonumber\\
     &=&  \begin{vmatrix}
            1+\mathcal{D}^+_J(f)_{1\bar{1}}& \mathcal{D}^+_J(f)_{1\bar{2}} &\cdots &\mathcal{D}^+_J(f)_{1\bar{n}}  \\
             \mathcal{D}^+_J(f)_{2\bar{1}}&  1+\mathcal{D}^+_J(f)_{2\bar{2}} &\cdots   &\mathcal{D}^+_J(f)_{2\bar{n}}\\
           \vdots& \vdots& \cdots&\vdots\\
             \mathcal{D}^+_J(f)_{n\bar{1}}&\mathcal{D}^+_J(f)_{n\bar{2}}  & \cdots  &  1+\mathcal{D}^+_J(f)_{n\bar{n}}\\
          \end{vmatrix}.
  \end{eqnarray}
   Here, by Theorem \ref{elliptical system theo}, we have
   \begin{eqnarray*}
     \mathcal{D}^+_J(f) &=& dJdf+dd^*\sigma(f) \\
      &=& dJdf_0+da(f_0),
   \end{eqnarray*}
   where $$ -\frac{1}{n}\Delta_{g}f_0=\frac{\omega^{n-1}\wedge(\tilde{\omega}-\omega)}{\omega^n},$$
    and
  \begin{equation}
  \left\{
    \begin{array}{ll}
    d^{*}a(f_0)=0& , \\
      &   \\
    d^-_JJdf_0+d^-_Ja(f_0)=0 & , \\
      &   \\
  \omega^{n-1}\wedge da(f_0)=0 & .
   \end{array}
  \right.
  \end{equation}
   Accordingly, we may replace $M(f)$ by $M(f_0)$.
 Further more, if $\tilde{g}$ is a HEAK metric, that is, $\tilde{\rho}=\lambda\tilde{\omega}$,  then there exists an $F\in C^\infty(M)$ such that
 \begin{eqnarray*}
   \rho &=&\lambda\omega+d\alpha  \\
   &=& \lambda\omega+dJdF+da(F).
 \end{eqnarray*}
 Then
 \begin{eqnarray}\label{M equ}
\lambda dJdf_0&=&\tilde{\rho}-\rho+dJdF+da(F)-\lambda da(f_0) \nonumber\\
    &=&  -\frac{1}{2}(dJd\log M(f_0))^{(1,1)}+dJdF+da(F)-\lambda da(f_0).
 \end{eqnarray}
 Then (\ref{M equ}) implies that $-\lambda f_0= \frac{1}{2}\log M(f_0)-F+c$
 since
 $$
 \lambda dJdf_0\wedge\omega^{n-1}=( -\frac{1}{2}dJd\log M(f_0)+dJdF)\wedge\omega^{n-1}
 $$
 and
 $$
 -\lambda\Delta_{g}f_0=\frac{1}{2}\Delta_{g}\log M(f_0)-\Delta_{g}F.
 $$

 Therefore the existence of HEAK metrics on almost K\"{a}hler manifold $(M^{2n},\omega,J,g)$ is equivalent to solving
  \begin{equation}\label{exis equ}
  \left\{
    \begin{array}{ll}
    \log M(f_0)=f_0+F, & ~ if ~~ c_1(M,J)<0 ;\\
      &   \\
     \log M(f_0)=F+c, & ~ if ~~ c_1(M,J)=0;\\
      &   \\
      \log M(f_0)=-f_0+F, & ~ if ~~ c_1(M,J)>0.
   \end{array}
  \right.
  \end{equation}
      Similar to K\"{a}hler case, we have the following statements for HEAK metrics.
      If $c_1(M,J)=0$, the existence of HEAK metrics on almost K\"{a}hler manifold $(M^{2n},\omega,J,g)$ is equivalent the existence of the solution
     of the generalized Monge-Amp\`{e}re equation (\ref{generalized MA}).
     If $c_1(M,J)>0$, this is an important question which is regared as the generalized Yau-Tian-Donaldson conjecture for symplectic Fano manifolds.
     For K\"{a}hler Fano manifolds, see Chen-Donaldson-Sun\cite{CDS}, Tian \cite{Tian4} and others.

     {\bf 4)} The generalzied almost K\"ahler-Ricci solitons (GeAKRS for short)

     For K\"{a}hler-Ricci solitons, there are a lot of paper for this topic, see \cite{Zhu,WXJZ} and others.
     Inoue \cite{Inoue} fixed a compact subgroup $G$ of $Ham(M,\omega)$ and consider the group $Ham^G(M,\omega)$ of Hamiltonian symplectomorphisms commuting with $G$.
     Let $AK^G_{\omega}$ be the space of $G$-invariant almost complex structures compatible with $\omega$.
     He proved that the action of $Ham^G(M^,\omega)$ on $AK^G_{\omega}$ with moment map given by
     \begin{equation}\label{}
       R^G_{\xi}(J)= R^G(J)-n-2\Delta_gf_{\xi}+2f_{\xi}-2|f_{\xi}|^2_{g},
     \end{equation}
     where $f_{\xi}$ is a Hamiltonian potential of $\xi$ which is a fixed element in the center of the Lie algebra of $G$
     on a compact symplectic Fano manifold.
      Inoue proved that the zeros of this  moment map correspond to K\"ahler-Ricci solitons (cf. \cite[Proposition 3.2]{Inoue}).
     If an almost K\"ahler metric $(\omega,J,g)$ satisfies the condition $R^G_{\xi}(J)=0$, we call it generalized almost-K\"ahler-Ricci soliton
     (GeAKRS for short).
     For the study of GeAKRS, see \cite{ABL}.

  \vskip 6pt

  \noindent{\bf Acknowledgements.}\, The third author is very grateful to his advisor Z. L\"{u} for his support; the authors thank Haisheng Liu for some helpful discussions.

  \vskip 24pt

  \noindent Qiang Tan\\
 School of Mathematical Sciences,
  Jiangsu University, Zhenjiang, Jiangsu 212013, China\\
 e-mail: tanqiang@ujs.edu.cn\\

  \vskip 6pt

  \noindent Hongyu Wang\\
  School of Mathematical Sciences, Yangzhou University, Yangzhou, Jiangsu 225002, China\\
  e-mail: hywang@yzu.edu.cn\\

   \vskip 6pt

  \noindent Ken Wang\\
   School of Mathematical Sciences, Fudan University, Shanghai, 100433, China\\
   e-mail: kanwang22@m.fudan.edu.cn

   \vskip 6pt

  \noindent Zuyi Zhang\\
   Beijing International Center for Mathematical Research, Peking University, Beijing, 100871, China\\
   e-mail: zhangzuyi1993@pku.edu.cn

 \end{document}